\numberwithin{equation}{section}
\newcommand{\toi}{\to\infty}
\newcommand{\eind}{\stackrel{d}{=}}
\newcommand{\dto}{%
	\mathrel{\vbox{\offinterlineskip\ialign{%
				\hfil##\hfil\cr
				$\scriptstyle d$\cr
				$\longrightarrow$\cr
			}}}}
\newcommand{\asto}{%
	\mathrel{\vbox{\offinterlineskip\ialign{%
				\hfil##\hfil\cr
				$\scriptstyle a.s.$\cr
				$\longrightarrow$\cr
}}}}
\newcommand{\EE}{\mathbb{E}}
\newcommand{\PP}{\mathbb{P}}
\newcommand\1[1]{\mathbbm{1}_{#1}}
\newcommand{\dsum}{\displaystyle\sum}
\newcommand{\tsum}{\textstyle\sum}
\newcommand{\dint}{\displaystyle\int}
\newcommand{\NN}{\mathbb{N}}
\newcommand{\XX}{\mathbb{X}}
\newcommand{\bbS}{\mathbb{S}}
\newcommand{\bex}{\boldsymbol{e}}
\newcommand{\PPP}{\mathrm{PPP}}
\newcommand{\RAM}{\mathrm{RAM}}
\newcommand{\Npp}{N}
\newcommand{\nmap}{\mathcal{V}}
\newcommand{\sB}{\mathcal{B}}
\newcommand{\sS}{\mathcal{X}}
\newcommand{\tgamma}{\textit{gamma}}
\newcommand{\tbeta}{\textit{beta}}
\newcommand{\texp}{\textit{Exp}}
\newcommand{\tunif}{\textit{U}}
\newcommand{\qand}{\quad \mbox{ and } \quad}
\newcommand{\vep}{\varepsilon}
\newtheorem{theorem}{Theorem}[section]
\newtheorem{lemma}[theorem]{Lemma}
\newtheorem{definition}[theorem]{Definition}
\newtheorem{corollary}[theorem]{Corollary}
\newtheorem{proposition}[theorem]{Proposition}
\newtheorem{hypothesis}[theorem]{Assumption}
\theoremstyle{remark}
\newtheorem{remark}[theorem]{Remark}
\newtheorem{example}[theorem]{Example}
\newcommand{\COM}[1]{} 
\newcommand{\sep}{\mbox{$\bullet$\ }}
\newcommand{\Sub}{\tau}
\newcommand{\Inv}{L}
\title{On generalized arcsine laws and residual allocation models  }
\author{Bojan Basrak}
\date{%
{Department of Mathematics, Faculty of Science}\\
	{University of Zagreb}\\ 
}
\begin{document}

\maketitle

\begin{abstract}
Based on their earlier studies of the arcsine law, Pitman and Yor in \cite{PY97} constructed a widely adopted  PD($\alpha, \theta)$ family of random mass-partitions with parameters $\alpha \in [0,1),\ \theta+\alpha>0$.  We propose an alternative model based on generalized perpetuities, which extends the PD family in a continuous manner,  
incorporating  any $\alpha\geq 0$.  This perspective yields a new, concise proof  for the stick-breaking (or residual allocation) representations of  PD distributions, recovering the classical results of McCloskey and Perman in particular.  We apply this framework to provide a  constructive and intuitive proof of  Pitman and Yor's generalized arcsine law concerning the partitions arising from $\alpha$-stable subordinators for $\alpha \in (0,1)$. The result shows that the random partitions generated by stable subordinators have  identical distributions when observed over temporal or spatial  intervals. This theorem has a number of significant implications for excursion theory.  As a corollary, using purely probabilistic arguments, we obtain general arcsine laws for excursions of $d$-dimensional Bessel process for $0<d<2$, and Brownian motion in particular.
\end{abstract}

\paragraph{Keywords}
arcsine law \sep subordinators \sep Poisson-Dirichlet distributions \sep stick-breaking distributions \sep mass-partitions

\section{Introduction}
Random discrete distributions are typically represented in the form $ \sum_i V_i \delta_{Q_i}$ where $(V_i)$  denotes  a sequence of nonnegative random variables which sum up to 1 and $(Q_i)$ represents
a sequence of (random) elements  with values in a chosen state space. The components are often assumed to be independent and  $(Q_i)$ is assumed to be an i.i.d. sequence, see \cite{IJ01}.
With applications spanning many areas, random discrete distributions play a particularly significant role in genetics \cite{Fe10book,Cr16}, excursion theory~\cite{PPY92, Pe93}, combinatorial stochastics \cite{AT94,Pi06book}, and Bayesian nonparametrics and machine learning \cite{ALC19,DeBetal13}.

%
The random weights $V_i$ can be represented as a random element $\sum_i \delta_{V_i}$ of the space of counting measures in $[0,\infty)$  whose points add up to one exactly,  denoted by  $M_{p,1}$. We  refer to $\sum_i \delta_{V_i}$  as a (proper) random {\em mass-partition} (see Bertoin \cite{Be06book}) or, somewhat imprecisely, {\em random distribution}. We sometimes identify $\sum_i \delta_{V_i}$  with the countable set $\{V_i\}$ or
 its nonincreasing ordering and work with nonnegative
sequences $V_1 \geq V_2\geq V_3 \geq \ldots$ which add up to one, as done by \cite{PY97} or \cite{Pi06book},  however the advantage of using the point process representation will become apparent in the sequel. 
If there exists a sequence $(V'_i)$ such that $\sum_i \delta_{V_i} \eind \sum_i \delta_{V'_i}$, we will call it a representation of the random distribution $\sum_i \delta_{V_i}$.

Among numerous models for random mass-partitions,  the most widely adopted is the Poisson-Dirichlet PD($\alpha,\theta$) family with parameters $\alpha \in [0,1),\, \theta >-\alpha$,  due to Pitman and Yor \cite{PY97}. It elegantly extrapolates  the standard Poisson-Dirichlet  and standard Pitman-Yor distribution, which correspond to the cases PD($0,\theta$) and PD($\alpha,0$) respectively, which were originally suggested by Kingman in \cite{Ki75}. These two special PD models were extensively studied and applied 
	because of their role in stochastic processes, combinatorics,  number theory, statistical physics and many other fields, see for instance  \cite{Bi13,Pi06book,Be06book,De81}.
Over the years, several extensions of the PD model were proposed, e.g. see  \cite{DeBetal13,Ja13,DLS23,RPS25}. However, we could not find a model that suggests how to naturally extend  the PD model for $\alpha\geq 1$, as we will do here using the concept of {\em generalized beta/gamma perpetuities}. 


Clearly, any sequence of nonnegative random variables $(X_i)$ satisfying $\sum_i X_i <\infty$ a.s., after  self-normalization gives rise to a random  mass-partition $\sum_i \delta_{V_i}$. Nevertheless, the standard random distributions used in the literature  arise in three canonical ways we describe next.

\paragraph{Self normalized point processes and subordinators.} 
  Consider  a subordinator  $(\Sub_t)$ with no drift and  with a L\'evy measure $\mu$ concentrated on $[0,\infty)$ which satisfies $\int x\wedge 1 \mu(dx) <\infty$. From $(\Sub_t)$, we obtain a whole family of random discrete distributions, simply by normalizing its jumps over a given time interval $[0,t]$, $t>0$.  Any such subordinator has a representation 
  \begin{equation}\label{eq:GenSub}
  	\Sub_t= \int x  \1{s \leq t} \Npp(ds,dx)
  \end{equation}
 where $\Npp$ is a Poisson point processes (PPP) with intensity measure $Leb \times \mu$, which we denote by 
\begin{equation}\label{eq:PPPgen}
	 \Npp  
 = \dsum_{i=1}^\infty \delta_{T_i,P_i}\sim \PPP(Leb \times \mu)\,.
\end{equation}

The space of counting measures on the background space $\XX$ is denoted by $M_p(\XX) $ in the sequel.  In our case $\XX$ is  typically equal to $[0,\infty)^d$ for some integer $d \geq 1$.
For convenience, we  write both $\{ x_i : i \in I \}$  and $\sum_{i \in I } \delta_{x_i}$ to denote
an element of $M_p(\XX) $. By $M_{p,f}$ and $M_{p,1}$ we denote the spaces of counting measures in $M_p[0,\infty)$ whose points sum up to a finite number, or to 1, respectively, see Section \ref{Sec:aux}.
To obtain random mass-partition  from a counting measure in $M_{p,f}$, we apply  the (self-normalization) map $\nmap:M_{p,f}\to M_{p,1}$ given by 
\begin{equation}\label{eq:nmapDef}
	\mu = \sum_i \delta_{x_i} \mapsto 
\nmap ( \mu) = \sum_i \delta_{x_i/\sum_j x_j} .
\end{equation}
From the point process $N$ in \eqref{eq:PPPgen} one can obtain a random  distribution for any $t>0$ by 
\begin{equation}\label{eq:VofNa}
 \sum_i \delta_{V_i}  =	\nmap\left(\sum_{T_i\leq t} \delta_{P_i} \right)\,.
\end{equation}

 \paragraph{Stick-breaking distributions.}
 To construct the second class of examples, assume that $(Y_i)$ denotes a sequence of random variables with values in $[0,1]$. Consider now the stick-breaking scheme
 \begin{equation}\label{eq:SBgenY}
 	V_1= Y_1,
 	\qand 
 	V_n = (1-Y_1)( 1- Y_2)\cdots  (1-Y_{n-1})Y_n,\ n \geq 2.
 \end{equation} 
 Observe that the sequence $(V_n)$  sums up to one  if and only if 
 $\prod_{j=1}^n(1-Y_j) \to 0$ as $n \toi$. 
 This holds 
almost surely if $(Y_j)$ is an i.i.d. sequence
 with nondegenerate distribution. 
 In any such case we say that $\sum_i \delta_{V_i}$ defines a general {\em stick-breaking distribution} or a general {\em residual allocation model} ($\RAM$).
  If $Y_j \sim \tbeta(a_j,b_j)$ for some strictly positive sequences $(a_j)$ and $(b_j)$,
 one arrives at a general infinitely-dimensional family
   $\mathcal{P}_\infty((a_j),(b_j))$
  models  suggested by Ishwaran and James in \cite{IJ01}. 
 
 It is known  that  all the  distributions in PD($\alpha,\theta$) family  have  representations as in  \eqref{eq:SBgenY}, see \cite{PPY92} or \cite{Pi06book}. We give a simple proof of this fact, by showing that the same holds for a larger family of distributions which arise from  beta/gamma perpetuities we describe next.
 
\paragraph{Generalized beta/gamma perpetuities.} 
The third important class of random distributions can be derived by normalizing randomly weighted 
sequence of  random variables $(G_i)$. It is useful in the sequel to abuse the notation and assume that $\nmap$ also maps a summable nonnegative sequence into a new sequence by self-normalization, that is for a sequence $(x_i)$ in $[0,\infty)^d$ or $[0,\infty)^\NN$ such that $\sum_i x_i< \infty$ we write
\begin{equation}\label{eq:nmapSeq}
	(x_n)\mapsto 
	\nmap ((x_n))  = \left( \frac{x_n}{\sum_i x_i}  \right)\,,
\end{equation}
which we also often tacitly consider  as an element of $M_{p,1}$ space.
 Recall, if $G_i\sim \tgamma(\alpha_i,1)$, $i =1,\ldots, n,$ is a finite sequence of independent random variables, the normalized sequence $	\nmap ( G_1 ,\ldots,G_n)$ has the Dirichlet distribution, which is clearly symmetric when $G_i$'s are i.i.d..
Since applying the self-normalization to an infinite sequence   of  nonnegative i.i.d. random variables $(G_i)$ is not an option,  
it was suggested  to consider a sequence 
 \begin{equation}\label{eq:GPin1}
	\nmap \left( G_n \Pi_n\right) =
	\nmap \left( (G_n \Pi_n)_n \right)
\end{equation}
where $(\Pi_n)$ denotes another (independent) sequence of nonnegative random weights which  makes
the products above summable a.s., cf.  the  concept of $P$-means in   \cite{Pi06book} or Dirichlet means in \cite{CR90} or \cite{JRY08}.
 There are  many choices for $(G_n)$ and $(\Pi_n)$, but some appear naturally in this context. Consider first
 \begin{equation}\label{eq:GPin}
	\nmap \left( G_n \Pi_n\right) \quad \mbox{with} \quad 
 	\Pi_n = \prod_{i=1}^{n-1} U_i, \ j \geq 1,
\end{equation}
for a sequence $(U_n)$ of independent nonnegative random variables independent of $(G_n)$. A well studied special case is obtained when $U_n$'s are
i.i.d. as well.  In that case, it is known that if $\EE \log U <0$ and if $\EE \log^+ G <\infty$, the sum $ X=  \sum_n G_n \Pi_n < \infty $ is a.s. finite and satisfies
$
X  \eind  U X +G\,,
$
where all three random variables on the right hand side are independent, see Vervaat \cite{Ve79}.
The random variables $X$ fitting this description are also  called  perpetuities \cite{EG94}. There exists extensive literature concerning  stochastic recursions of this type, see  \cite{DF99} and \cite{BDM16}.
In the case when $U_i$'s are merely independent we refer to  them as generalized perpetuities. 
The construction of  Dirichlet distribution through gamma random variables motivates the following.

\begin{definition}
We say that a random mass-partition $\sum_i \delta_{V_i}$
belongs to
{\em beta/gamma (perpetuity)} family of distributions  if
\begin{equation}\label{eq:GenGamPer}
\sum_i \delta_{V_i} \eind \nmap ( G_n \Pi_n) 
\end{equation}
where $	\Pi_n = \prod_{i=1}^{n-1} U_i,$ for
some independent  random variables $(G_i)$ and $(U_i)$, such that  $G_i$'s have identical gamma distribution  while $U_i$'s are beta distributed.
\end{definition}

 We will see  that all the  distributions in PD($\alpha,\theta$) family can be represented in this way.
 \COM{ In particular, the self-normalized jumps of an $\alpha$-stable subordinator}


\paragraph{Outline and main results.}
The main contributions of the paper can be summarized as follows:

i)  In \Cref{thm:GPi}, and a more general  \Cref{lem:gam}, we show that mass-partitions with beta/gamma
perpetuity representation 
belong to a class of stick-breaking schemes. The result covers all the    PD($\alpha,\theta$) distributions, and yields the classical results of McCloskey and Perman  as a corollary. 
ii)   Based on \Cref{thm:GPi}, we introduce a new, three parameter class of  residual allocation models which continuously extends the  family of PD distributions. Many alternative models have been suggested over the years,
 see for instance \cite{DeBetal13,Ja13,DLS23,RPS25}, but the advantage of our proposal is that it incorporates  any $\alpha\geq 0$
 in a  natural way.
In Section~\ref{sec:PYthm}, we provide a  constructive geometric proof of  Pitman and Yor's generalized arcsine law concerning the partitions arising from $\alpha$-stable subordinators for $\alpha \in (0,1)$,  see \Cref{thm:1} and \Cref{cor:AGammaV}.  Due to the connection with the   lengths of excursions of  Bessel processes of dimension $d\in (0,2)$, the theorem  has a pivotal role  in  theory of stochastic processes, see \cite{PPY92}.   
Consequently,  we present very general arcsine laws for excursions of $d$-dimensional Bessel process for $0<d<2$, and
 Brownian motion in particular.

We  also explore alternative representations for  PD($\alpha,\theta$)  distributions for  $\alpha \in (0,1)$ which turn out to be  useful  throughout the text. In particular, we show that all PD distributions have a representation in terms of a Poisson process $N^\vee$ on $(0,\infty)$ with intensity measure proportional to $ x^{-(\alpha+1)} e^{-x} dx$ for $\alpha \in (0,1)$. This yields a very short proof for one of the key results in \cite{PY97}, see Proposition 21 therein.

The main results of the article rely on two probabilistic ideas which have  not been previously applied  in this context. We use a new coupling argument to extend beta-gamma algebra to infinite sequences. This is the key behind 
the proof of  \Cref{lem:gam}, and consequently of \Cref{thm:GPi}.
On the other hand, the generalized arcsine law of Pitman-Yor, see  \Cref{thm:1},
is derived by  an intuitive and constructive thinning argument.
The rest of our methods  are essentially simple probabilistic arguments stemming from the basic properties of Poisson processes.
Some of the  prerequisites and auxiliary results
are postponed to Section~\ref{Sec:aux}. 

\section{Stick-breaking theorem}

It is useful in the sequel to let
$(\Gamma_i)$ denote the points of 
the unit rate homogeneous Poisson process  on $(0,\infty)$, that is $\Gamma_i = D_1 + \cdots + D_i$, $i \geq 1$ for an i.i.d. sequence of standard exponential random variables $(D_i)$. We abbreviate this by
\begin{equation}\label{eq:hPPP}
	\sum_{i=1}^\infty \delta_{\Gamma_i} \sim \PPP(Leb).
\end{equation}

It is also useful to introduce scaling operator  on $[0,\infty) \times M_p([0,\infty)^d)$, so that we can write 
for instance $ c \sum_i \delta_{ P_i} =  \sum_i \delta_{c P_i}$.

 \subsection{Size-biasing}
 One can use an i.i.d. sequence of standard exponential random variables $(E_n)$ to  
 independently mark each point of the Poisson process $\Npp= \sum_i \delta_{T_i,P_i}$ in \eqref{eq:PPPgen} to arrive at
 \begin{equation}\label{eq:NnaE}
 	N^E = \dsum_i  \delta_{T_i,P_i,E_i}\,.
 \end{equation}
 It is again a Poisson process with intensity measure equal to $Leb \times \mu \times \nu_{E}$ on $(0,\infty)^3$ where $\nu_E$  denotes the law of a generic $E$. 
   For each  $s>0$, one can transform $N^E$ into
  \begin{equation}\label{eq:NEkrozPgen}
  	N^{E/P} = N^{E/P}_s = \dsum_{T_i \leq s} \delta_{E_i/P_i, P_i}\,.
  \end{equation}
Note that $P_i$'s now denote only the jumps  of the corresponding subordinator until time $s$. This is again a Poisson point process, see \cite[Theorem 5.1]{LP18}, with  finitely many points in each set $[0,t] \times (0,\infty)$. Let $s$ be fixed and rename  the coordinates of $N^{E/P}$  as pairs $(Z_j,\tilde P_j)$,  so that $0 \leq  Z_1 \leq Z_2 \leq Z_3 < \ldots $, and write
  \begin{equation}\label{eq:NEkrozPZ}
	N^{E/P} 
	= \dsum_{i=1}^\infty \delta_{Z_i, \tilde P_i}\,.
\end{equation}
 Using the fact that $(E_i)$ is an i.i.d. sequence from exponential distribution, it is easy to see  that $(\tilde P_1, \tilde P_2, \tilde P_3, \ldots )$ simply correspond to the size biased permutation of $\{P_i : \, T_i \leq 1\}$ in \eqref{eq:NEkrozPgen}, see also Lemma 4.4 in \cite{PPY92}.
Therefore,  the self-normalized values $(P_i)$ can be represented in  size biased ordering as
\[
	(\tilde V_n)= \nmap(\tilde P_n)=
	\left(\frac{\tilde P_n}{\sum_j \tilde P_j} \right)\,.
\]

We explore next the effect of size-biasing on two standard models  initially suggested and studied by Kingman in \cite{Ki75}.

\paragraph{Standard Poisson-Dirichlet distribution.}
Assume that $	\Npp\sim \PPP(Leb \times\gamma)$ 
where $\gamma(dt) = t^{-1} e^{-t} dt $. The process $\Npp$ gives rise to the right continuous nondecreasing process $(\Sub_s)$ called gamma subordinator as in \eqref{eq:GenSub}.
The standard Poisson Dirichlet distribution is now defined simply as the self-normalization of $\sum_{T_t \leq a} \delta_{P_i}$ for a fixed $a>0$, that is as 
\begin{equation}\label{eq:PDa}
	\dsum_j \delta_{V_j}=	\nmap \left( \sum_{T_t \leq a} \delta_{P_i} \right)\,.
\end{equation}
As observed above
$N^{E/P}_a $
 is  a Poisson point process with  intensity measure $\nu^{(2)}$ say. 
It is easily seen, see \Cref{ex:EPab}, that
$\nu^{(2)} (du,dv) = \gamma_a (du) \otimes K(u,dv) $,  
where $\gamma_a (0,u) = \log (1+u)^a$ and $K(u,\cdot) $ is a probability marking kernel on $(0,\infty)\times \sB(0,\infty) $ corresponding to the  exponential distribution with parameter $1+u$.
This yields the following identity in distribution
\begin{equation}\label{eq:NEQ}
	N^{E/P}_a \eind \dsum_i \delta_{Z^a_i, G_i/(Z^a_i+1)}\,,
\end{equation}
where $(Z^a_i)_i$ represent the points of a Poisson process with intensity measure $\gamma_a$ which is independent of an i.i.d. sequence $(G_i)_i$ of standard exponential random variables.  It is simple to see that the sequence $(Z^a_i)_i$ has the distribution of $(\exp( \Gamma_i /a) -1)_i$ for a standard homogenous Poisson process $(\Gamma_i)$ in \eqref{eq:hPPP}. Recall that the size-biased sample $(\tilde V_j)$ can be derived by the self-normalization of the second coordinates of $	N^{E/P}_a$, therefore
\begin{align} \label{eq:tildVa}
	(\tilde V_n) =  \left( \frac{G_n /\exp( \Gamma_n/a) }{\sum_{j=1}^\infty G_j/\exp(\Gamma_j/a )} \right)
= \nmap (G_n \Pi_n),
\end{align}
where $\Pi_j = \prod_{i=1}^{j-1} \exp( (\Gamma_{i} - \Gamma_{i+1})/a)$,  by convention $\Pi_1=1$ in the sequel.
In particular, $\Pi_j$ is a product of i.i.d. random variables
from $\tbeta(a,1)$ distribution. Thus, the standard Poisson-Dirichlet distribution of $\sum_j \delta_{V_j}$ belongs to beta/gamma  family of distributions.
Using \eqref{eq:hPPP},  we can also write
\begin{equation}\label{eq:PDGamPoi}
	(\tilde V_n) =
	=  \nmap(G_n e^{ -\Gamma_n/a}) = 
\nmap(G_n e^{ -(D_1+\cdots + D_n)/a}).
\end{equation}


\paragraph{Standard Pitman-Yor distribution.}

In the seminal series of papers coauthored by Perman, Pitman and Yor \cite{PY92,PPY92,Pe93,PY97,PY97b,Pi97}, the authors showed a number of striking properties of the mass-partitions generated by stable  subordinators. In that setting, $\Npp \sim \PPP(Leb \times \mu_\alpha)$ where
$\mu_\alpha(u,\infty) = K u^{-\alpha}$ for any $u>0$, some $\alpha \in (0,1)$, and an arbitrary constant $K>0$.
The corresponding right continuous nondecreasing process $(\Sub_t)$ is 
the so called $\alpha$-stable subordinator (see Bertoin~\cite{Be99} or Revuz and Yor~\cite[Ch. III]{RY99}).
It is straightforward to see that point process $\Npp$ has the following scaling property for any fixed $c>0$
\begin{equation}\label{eq:ScalProp}
	\Npp\eind \dsum_i \delta_{cT_i,c^{1/\alpha} P_i} \,.    
\end{equation} 
In particular, for any $s>0$
\begin{equation}\label{eq:VofSeq1}
	\dsum_j \delta_{V_j}= \nmap\left( \sum_{T_i\leq 1} \delta_{ P_i}\right) \eind	
\nmap \left( \sum_{T_i\leq s} \delta_{s^{-1/\alpha} P_i}\right) =\nmap \left(\sum_{T_i\leq s} \delta_{ P_i}\right) 
\,.
\end{equation}
Hence, one can, without loss of generality,  choose  constants $s$ and  $K$ conveniently, therefore we assume in the sequel $s=1$ and 
$K = 1/\Gamma(1-\alpha)$, i.e. 
\begin{equation}\label{eq:Densmualfa}
	\mu_\alpha(du) =\frac{\alpha}{\Gamma(1-\alpha)}  \frac{du}{u^{\alpha+1}}.
\end{equation}

The  intensity measure of $N^{E/P}$ in \eqref{eq:NEkrozPZ} in this case can  be written as $\nu^{(2)} (du,dv) = \nu_\alpha(du) \otimes K(u,dv) $, where  $\nu_\alpha (0,u) = u^\alpha$ and $K(u,\cdot) $ is a probability marking kernel on $(0,\infty)\times \sB(0,\infty) $ corresponding to the $\tgamma(1-\alpha, u)$ distribution, see \Cref{ex:EPab}. 
This means that $N^{E/P} $ has an alternative  representation
\begin{equation}\label{eq:NEkrozPrep}
	N^{E/P} 
	= \dsum_{i=1}^\infty \delta_{Z_i, G_i/Z_i} \,,
\end{equation}
where  $\sum_i \delta_{Z_i} $  is  a Poisson process on $(0,\infty)$ with intensity measure $\nu_\alpha$ and  $(G_i)$ is an independent i.i.d. sequence of gamma$(1-\alpha,1)$ distributed random variables. 
Moreover,
\begin{equation}\label{eq:ZeindGama}
	(Z_i)_{i\geq 1} \eind (\Gamma_i^{1/\alpha})_{i\geq 1}
\end{equation} 
where $(\Gamma_i)_i$  represent the points of the unit rate  Poisson process in \eqref{eq:hPPP}. 
Therefore, by
\eqref{eq:NEkrozPrep}, the size-biased ordered sequence $(\tilde V_n)$  has the form
\begin{align}\label{eq:tildV}
	(\tilde V_n)= 
	\left(\frac{\tilde P_n}{\sum_j \tilde P_j} \right)
	= \nmap(G_n /Z_n)
	= \nmap (G_n \Pi_n),
\end{align}
where $\Pi_j =  \prod_{i=1}^{j-1} Z_i/Z_{i+1}$. 
From \eqref{eq:ZeindGama},  
it follows that   $Z_i/Z_{i+1}
= (\Gamma_i/\Gamma_{i+1})^{1/\alpha} \sim \tbeta (i \alpha,1)$ are independent random variables. 
Therefore $\sum_j \delta_{\tilde V_j}$ again belongs to beta/gamma perpetuity type of distributions.

\subsection{Poisson-Dirichlet distributions}

Pitman and Yor in \cite{PY97} proposed a change of measure in the construction  of the standard Pitman-Yor process 
to derive a more general version of  \eqref{eq:tildV}.
 We will construct the same  distribution  using an equivalent,  but  more explicit change of measure, see \eqref{eq:alfGamPoi}. Assume that $\alpha \in (0,1)$ and that a  constant $\theta$ satisfies $\theta>-\alpha$. One can now bias the distribution of $N^{E/P}$ in \eqref{eq:NEkrozPrep} by the value $Z_1^\theta = \Gamma_1^{\theta/\alpha}$. Indeed, note that 
 \[
 \label{eq:Z1Bias}
 \EE (Z_1^\theta) =  \EE (\Gamma_1^{\theta/\alpha})
 = \Gamma(\theta/\alpha+1)<\infty .
 \]
 Therefore 
 \begin{equation}\label{eq:NEPZ1Bias}
 \PP^* (\cdot) =  	\EE \left( N^{E/P} \in \cdot \,; \frac{Z_1^\theta}{ \Gamma(\theta/\alpha+1)} \right) 
 \end{equation}
 defines a new probability distribution on $M_p((0,\infty)^2)$,  see \cite{AGK19}. 
Clearly, if $\theta \not = 0$, the new probability measure $\PP^*$ changes the distribution of $\{V_n \} = \nmap(\{ P_i :T_n \leq 1\})$ in \eqref{eq:VofSeq1} and
equivalently of
$ (\tilde V_n )=   \nmap(G_n /Z_n)$ in \eqref{eq:tildV}
as well.

\begin{definition}
{\em Poisson-Dirichlet distribution} (cf. Pitman and Yor \cite{PY97})  with  parameters $\alpha \in [0,1)$ and $\theta > -\alpha$ (or PD($\alpha,\theta$)) is
defined  as follows: the case PD($0,\theta$) corresponds to the distribution 
of   $\sum_j \delta_{V_j}$ arising from the gamma subordinator
as in \eqref{eq:PDa} with $a=\theta$.
Distributions in  PD($\alpha,\theta$) class for $\alpha \in (0,1),\ \theta> -\alpha$,  correspond to the distributions 	of $\sum_j \delta_{V_j}$ in \eqref{eq:VofSeq1} after the change of measure in \eqref{eq:NEPZ1Bias}.
\end{definition}

\begin{proposition} \label{pro:GamBet1}
Each  PD($\alpha,\theta$) distribution belongs to the beta/gamma family of distributions with   independent random variables
 $G_n \sim \tgamma(1-\alpha,1)$ and $
U_n \sim \tbeta(a_n,1)$, where $a_n=\theta+ n\alpha$, $ n \geq  1$.
%
\end{proposition}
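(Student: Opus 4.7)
The plan is to dispose of $\alpha = 0$ directly from \eqref{eq:PDGamPoi}, and to treat the generic case $\alpha \in (0,1)$, $\theta > -\alpha$, by combining the size-biased representation \eqref{eq:tildV} with the explicit change of measure \eqref{eq:NEPZ1Bias} and an inductive use of the two-variable beta/gamma identity. For $\alpha = 0$, equation \eqref{eq:PDGamPoi} already exhibits the size-biased sample as $\nmap(G_n\Pi_n)$ with $G_n \sim \texp(1) = \tgamma(1-0,1)$ and factors $U_i = e^{(\Gamma_i-\Gamma_{i+1})/\theta} \sim \tbeta(\theta,1)$, which matches $a_n = \theta = \theta + n\cdot 0$, so the claim holds at the boundary with no further work.

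For $\alpha \in (0,1)$, I would start from \eqref{eq:NEkrozPrep}--\eqref{eq:tildV} under $\PP$: the marked process $N^{E/P}$ is driven by an i.i.d.\ $\tgamma(1-\alpha,1)$ sequence $(G_i)$ independent of the points $(Z_i) \eind (\Gamma_i^{1/\alpha})$. Writing $b = \theta/\alpha$, the biasing factor $Z_1^\theta/\Gamma(b+1) = \Gamma_1^b/\Gamma(b+1)$ depends only on $\Gamma_1$; hence under $\PP^*$ the sequence $(G_n)$ retains its distribution and independence from the $\Gamma$-process, while $\Gamma_1$ is tilted to $\tgamma(b+1,1)$ and the increments $D_j = \Gamma_j - \Gamma_{j-1}$ for $j \geq 2$ remain i.i.d.\ $\texp(1) = \tgamma(1,1)$, independent of $\Gamma_1$. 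This is the only effect of the biasing that matters.

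The core of the argument is to show that, under $\PP^*$, the ratios $W_j = \Gamma_j/\Gamma_{j+1}$ are mutually independent with $W_j \sim \tbeta(b+j,1)$. I would establish this by induction on $k$, proving the joint statement that $(W_1, \ldots, W_{k-1}, \Gamma_k)$ are independent, $W_j \sim \tbeta(b+j,1)$ and $\Gamma_k \sim \tgamma(b+k,1)$. The inductive step applies the classical two-variable beta/gamma identity to $(\Gamma_k, D_{k+1}) \sim \tgamma(b+k,1) \otimes \tgamma(1,1)$, yielding $W_k \sim \tbeta(b+k,1)$ independent of $\Gamma_{k+1} \sim \tgamma(b+k+1,1)$; independence of $W_k$ from $(W_1, \ldots, W_{k-1})$ is then inherited from the inductive independence of those ratios from $\Gamma_k$ together with the independence of $D_{k+1}$ from the past. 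Alternatively, a one-shot Jacobian computation on $(\Gamma_1, D_2, \ldots, D_k) \mapsto (W_1, \ldots, W_{k-1}, \Gamma_k)$ delivers the factorized joint density directly. The passage $U_j = W_j^{1/\alpha}$ is a monotone change of variable that turns $\tbeta(b+j,1)$ into $\tbeta(\alpha(b+j),1) = \tbeta(a_j,1)$ and preserves independence, and substitution into \eqref{eq:tildV} yields $(\tilde V_n) \eind \nmap(G_n\Pi_n)$ with the advertised parameters. I expect the main obstacle to be verifying the full mutual independence of the $W_j$'s under the tilted measure $\PP^*$: the pairwise beta/gamma identity is standard, but propagating independence through the entire infinite sequence is exactly what the induction (or the Jacobian calculation) is needed to settle.
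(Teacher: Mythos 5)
Your proposal is correct and follows essentially the same route as the paper: dispose of $\alpha=0$ via \eqref{eq:PDGamPoi}, then observe that the tilt in \eqref{eq:NEPZ1Bias} only shifts $\Gamma_1$ to $\tgamma(\theta/\alpha+1,1)$ while leaving $(G_n)$ and the increments $D_j$, $j\geq 2$, unchanged and independent, so the ratios $(\Gamma_j/\Gamma_{j+1})^{1/\alpha}$ are independent $\tbeta(\theta+j\alpha,1)$ variables by beta-gamma algebra. The only difference is cosmetic: you spell out the independence of the ratios via an explicit induction (or Jacobian argument), whereas the paper cites it tersely "as a transformation of different independent gamma distributed random variables."
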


\begin{proof}
	In the case $\alpha=0$, we proved the claim after \eqref{eq:tildVa}. Assume therefore $\alpha \in(0,1)$.  It is sufficient to understand the effect of $\PP^*$ on the sequence $(\tilde V_n )$, since
	$  \{V_n \} = \nmap(\{ P_i :T_n \leq 1\})  = \{\tilde V_n \}$.
Observe that the change of measure in \eqref{eq:NEPZ1Bias}  has no influence on the sequence $(G_i)$ in \eqref{eq:NEkrozPrep}. However, under $\PP^*$  the sequence $(Z_i)$ has a new distribution, of a sequence $(Z_i')$ say. Using \eqref{eq:ZeindGama}, this distribution is straightforward to describe. 
 Now $Z'_1 ={D}_1^{\prime 1/\alpha}$ where
 $D^\prime_1 \sim \tgamma (\theta/\alpha+1,1)$ is independent of the Poisson process $(\Gamma_j-\Gamma_1)_{j\geq 2}$, while
 $Z'_j= (D'_1+ \Gamma_j - \Gamma_1)^{1/\alpha} = (D'_1+ D_2 +\cdots + D_j)^{1/\alpha}
 $ for $j\geq 2$. With this notation 
 $ \sum_i \delta_{Z'_i,G_i/Z'_i}$ has the distribution in \eqref{eq:NEPZ1Bias}. Therefore, after the change of measure, the size-biased sequence $(\tilde V_n)_n$  has  representation 
 \begin{align} \label{eq:tildVkappa}
 	\tilde V_n =   	\tilde V_n^\theta=  \frac{G_n /Z'_n}{\sum_{j=1}^\infty G_j/Z'_j}
 	= \frac{G_n \Pi_n}{\sum_{j=1}^\infty G_j \Pi_j},\quad
 	n=1,2, \ldots,
 \end{align}
 where $\Pi_j =  \prod_{i=1}^{j-1} Z'_i/Z'_{i+1}$. This time $Z'_i/Z'_{i+1}
 \sim \tbeta ( i \alpha+ \theta,1)$ as a transformation of different independent gamma distributed random variables, recovering \eqref{eq:tildV} for $\theta =0$. Therefore, \eqref{eq:tildVkappa} belongs to the beta/gamma perpetuity by definition.
 \end{proof}
 
 The proof shows that the sequence in \eqref{eq:tildVkappa}, and PD($\alpha,\theta$) distribution for $\alpha>0$ therefore, has an alternative representation
\begin{equation}\label{eq:alfGamPoi}
	 (\tilde V_n) = \nmap \left((G_n  (D'_1+ D_2 +\cdots + D_n)^{-1/\alpha}
 )\right),
\end{equation}
where $D^\prime_1 \sim \tgamma (\theta/\alpha+1,1)$ is independent of the i.i.d. standard exponential sequence $(D_i)$.

\begin{remark} \label{rem:perm}

 In \cite{PY97},   the authors
 use two other quantities to bias distribution of $\{ P_n\}$, and $\{ V_n\}$ therefore. Instead of $Z_1^\theta$ they suggest $\tau_1^{-\theta}$ and  $L^{\theta/\alpha}$,  where $L = \lim_n n V^\alpha_n$ with probability 1. However,
 	it is immediate that the three ideas yield the same distribution
 	of $( \tilde V_n)$. 
 		This follows from the simple observation that 
 		$(Z_1, \{P_i\}_i)$ have the same distribution as $(E/\tau_1, \{P_i\}_i)$ where $ E$ is an independent standard exponential random variable. 
 		The representation of $ (V_n)$ before \Cref{cor:1} and  the strong law of large numbers  together imply that
 	\[
 	n P^\alpha_n = n (V_n \tau_1)^\alpha   \asto K,
 	\]
 	 where $K>0$ is a constant in the definition of the measure $\mu_{\alpha}$. Therefore $L = K \tau_1^{-\alpha}$, hence the
 		biasing by  $L^{\theta/\alpha}$ is equivalent to  biasing by $\tau_1^{-\theta}$, see \cite{AGK19}.
 		See also Proposition 14 in \cite{PY97} and discussion thereafter.  
\end{remark}

 \subsection{Residual allocation model} 
 
Following earlier results by  McCloskey \cite{MC65} and  by Perman in  \cite{PePhd},
it was proved  in  \cite{PPY92}, that all the  distributions in the PD family have  stick-breaking representations.
We will show this as a straightforward corollary of a  theorem
concerning a larger family of 
random mass-partitions which can be represented by  beta/gamma perpetuities family of distributions.

Observe that the construction below allows any $\alpha\geq 0$, despite  the fact that for $\alpha\geq 1$ one cannot represent these distributions in a simple way using a subordinator. 

\begin{theorem}\label{thm:GPi}
	 For arbitrary constants   
	 \begin{equation}\label{eq:alphaa1c}
	 	\alpha \geq 0,\  a_1>0 \qand  c>0,
	 \end{equation}
 let $a_n = a_1+(n-1) \alpha,\ n \geq  1$. Suppose that
 $G_n \sim \tgamma(c,1)$ and $
 U_n \sim \tbeta(a_n,c+\alpha),\ n \geq  1$
 are all independent random variables.
 Then, for
 $ 
 \Pi_j = \prod_{l=1}^{j-1} U_l 
 $, for $j \geq 1$, the random series $\sum_{j=1}^\infty G_j \Pi_j$ converges a.s. and 
 $(\tilde V_n) = \nmap( G_n \Pi_n )$  has the following stick-breaking representation
 \begin{equation}\label{eq:Y1Y2gen}
 	\tilde V_1 = \tilde Y_1, \quad 	\tilde V_n= (1- \tilde Y_1)\cdots ( 1 -\tilde Y_{n-1}) \tilde Y_n, \quad n \geq 2\,,
 \end{equation}  
 with independent random variables $\tilde Y_n \sim\tbeta(c,a_n)$.
\end{theorem}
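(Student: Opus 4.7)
The plan is to prove the theorem via a coupling on an enlarged probability space, on which we simultaneously realize the beta/gamma perpetuity and the stick-breaking sequence with a pathwise identity between the two. The engine is the classical beta-gamma algebra applied inductively in the \emph{synthesis} direction: rather than decomposing an observed gamma as a beta times a gamma, we synthesize a new gamma from a beta-weighted residual plus a fresh auxiliary gamma. Given such a coupling with the correct joint distribution of $(G_n, U_n)$, the distributional stick-breaking identity follows at once, because $\nmap(G_n\Pi_n)$ is a measurable function of $(G_n,U_n)$.

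\textbf{Construction and induction.} On the coupling space, introduce independent variables $S_1\sim\tgamma(c+a_1,1)$, $\tilde Y_n\sim\tbeta(c,a_n)$, and $C_n\sim\tgamma(c+\alpha,1)$ for $n\geq 1$, and define recursively
\begin{equation*}
    G_n \mathrel{:=} \tilde Y_n\, S_n,\qquad S_{n+1} \mathrel{:=} (1-\tilde Y_n)\,S_n + C_n,\qquad U_n \mathrel{:=} \frac{(1-\tilde Y_n)\,S_n}{S_{n+1}}.
\end{equation*}
The induction hypothesis is that $S_n\sim\tgamma(c+a_n,1)$ and is independent of $(G_j,U_j)_{j<n}$. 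At the inductive step, since $\tilde Y_n$ and $C_n$ are freshly independent of the past, two applications of beta-gamma algebra yield (i) $G_n=\tilde Y_n S_n\sim\tgamma(c,1)$ independent of $(1-\tilde Y_n)S_n\sim\tgamma(a_n,1)$, and (ii) out of the independent pair $((1-\tilde Y_n)S_n, C_n)\sim(\tgamma(a_n,1), \tgamma(c+\alpha,1))$, one extracts $U_n\sim\tbeta(a_n,c+\alpha)$ independent of $S_{n+1}\sim\tgamma(c+a_{n+1},1)$. Moreover $G_n \perp (U_n,S_{n+1})$, and the block $(G_n,U_n,S_{n+1})$ is independent of $(G_j,U_j)_{j<n}$. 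This closes the induction and gives $(G_n,U_n)_{n\geq 1}$ with exactly the joint law stipulated by the theorem. The main obstacle will be bookkeeping this full independence $(G_n,U_n,S_{n+1}) \perp (G_j,U_j)_{j<n}$ at every stage; preserving this (and not merely independence of $S_{n+1}$ from $S_n$) is what legitimizes iterating beta-gamma indefinitely.

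\textbf{Identity and convergence.} From $U_k=(1-\tilde Y_k)S_k/S_{k+1}$, telescoping gives $\Pi_n=\prod_{k<n}(1-\tilde Y_k)\cdot S_1/S_n$, whence $G_n\Pi_n=\tilde Y_n\prod_{k<n}(1-\tilde Y_k)\cdot S_1$ and
\begin{equation*}
    \sum_{n=1}^{N} G_n\Pi_n \;=\; S_1 - \Pi_{N+1}S_{N+1}.
\end{equation*}
Using $\Pi_{N+1}\perp S_{N+1}$ from the induction, one computes $\EE[\Pi_{N+1}S_{N+1}]=(c+a_{N+1})\prod_{l=1}^{N}\tfrac{a_l}{a_l+c+\alpha}$, and a direct estimate (geometric decay when $\alpha=0$, polynomial decay of order $N^{-c/\alpha}$ when $\alpha>0$) shows this vanishes as $N\to\infty$. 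Combined with the pathwise monotonicity $\Pi_NS_N-\Pi_{N+1}S_{N+1}=\Pi_NG_N\geq 0$, one concludes $\Pi_NS_N\downarrow 0$ almost surely. Therefore $\sum_n G_n\Pi_n = S_1<\infty$ a.s., and on the coupling space
\begin{equation*}
    \tilde V_n \;=\; \frac{G_n\Pi_n}{S_1} \;=\; \tilde Y_n \prod_{k=1}^{n-1}(1-\tilde Y_k)
\end{equation*}
pathwise. Since $(G_n, U_n)$ on the coupling space has the joint distribution of the theorem, and the event of convergence and the sequence $\nmap(G_n\Pi_n)$ are measurable functions of $(G_n,U_n)$, this yields both the a.s.\ convergence of the series and the stick-breaking representation with independent $\tilde Y_n\sim\tbeta(c,a_n)$.
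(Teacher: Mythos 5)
Your proof is correct, but it takes a genuinely different route from the paper's. The paper proves the result via its Lemma 5.2, which starts from the given $(G_n,U_n)$, introduces the tail sums $W_n=\sum_{j\geq n+1} G_j\prod_{l=n}^{j-1}U_l$, and must identify $W_1\sim\tgamma(a_1,\theta)$; since the backward induction $W_{n-1}=U_{n-1}(G_n+W_n)$ lacks a base case at infinity, the paper replaces the tail $W_n$ by an independent surrogate $W'_n\sim\tgamma(a_n,\theta)$, proves the truncated sum $W_{n,1}\sim\tgamma(a_1,\theta)$ exactly, and concludes by an $L^1$ approximation $\EE|W_1-W_{n,1}|\to 0$. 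Only then is $\tilde Y_n:=G_n/(G_n+W_n)$ introduced and its independence checked. You instead run the beta-gamma algebra in the synthesis direction: you build the whole joint law of $(G_n,U_n)$ on a fresh coupling space out of a single $S_1\sim\tgamma(c+a_1,1)$, fresh $\tilde Y_n$ and fresh residuals $C_n$, and a forward induction yields the correct finite-dimensional joint distribution with no limiting argument. The total mass is then $S_1$ pathwise and the stick-breaking identity is an algebraic triviality; independence of the $(\tilde Y_n)$ is automatic because they are fresh by construction, rather than needing the paper's separate verification. What your approach buys is the elimination of the $L^1$ coupling step and the independence check; what the paper's buys is that it works intrinsically with the native $(G_n,U_n)$ and also characterizes the distribution of each tail $W_n$, which is of independent interest and reused elsewhere. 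Your convergence argument (monotonicity of $\Pi_N S_N$ plus $\EE[\Pi_{N+1}S_{N+1}]\to 0$) is sound, and the decay rates you cite ($N^{-c/\alpha}$ for $\alpha>0$, geometric for $\alpha=0$) match what the paper's verification of its Assumption 5.1 produces. One point worth stating explicitly for the record: your construction recovers exactly the paper's $\tilde Y_n$, since on your space $G_n+W_n=S_n$ and hence $G_n/(G_n+W_n)=\tilde Y_n$ identically.
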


\begin{proof}
	The proof follows from \Cref{lem:gam} concerning  more general  weighted sequences of beta/gamma type. We only need  to show that sequences $a_n = a_1 + (n-1) \alpha$, $b_n\equiv \alpha+c $ and $c_n \equiv c$ satisfy its assumptions.  Assumption~\ref{ass:1}, part (i) obviously holds since $a_j+b_j -a_{j+1} = c > 0 $ for all $j$. Observe that  $\pi_j =  \frac{a_1}{a_1+b_1} 
	\cdots \frac{a_{j-1}}{a_{j-1}+b_{j-1}}$ satisfy 
\begin{align*}
\pi_j 
& = \frac{1}{1+(\alpha+c)/a_1} \frac{1+\alpha/a_1}{1+\alpha/a_1 + (\alpha+c)/a_1}
\cdots \frac{1+(j-2)\alpha/a_1}{1+ (j-2)\alpha/a_1 + (\alpha+c)/a_1} \\
 & = \frac{1}{1+y} \frac{1+ x}{1+y+ x}
\cdots \frac{1+(j-2)x}{1+ y+ (j-2)x }
= \prod_{i=0}^{j-2} \frac{1+i x}{1+y+  i x }\\
& =   
	\frac{ \Gamma(\frac{1+y}{x} )}{ \Gamma(\frac{1}{x})}
\frac{\Gamma\left( \frac{1}{x} + j-1 \right)  }{\Gamma\left(\frac{1+y}{x} + j-1\right)}
\sim C_{x,y} (j-1) ^{-y/x}\,,
\end{align*}
for $j \toi$,
where $x= \alpha/a_1 $ and $y = (\alpha+c)/a_1$, and $C_{x,y}>0$ is a constant dependent on these two parameters and where we used the asymptotic relation $\Gamma(n+z)\sim \Gamma(n) n^z$  as $n \toi$, for any real  $z$, following from the Stirling's formula.
%
Since $y >x$, this yields  (ii) and (iii) of Assumption~\ref{ass:1} at once.

\end{proof}

By \Cref{pro:GamBet1}, 
Theorem~\ref{thm:GPi}  applies to all Poisson-Dirichlet distributions, proving  that  they all have  stick-breaking representation \eqref{eq:Y1Y2gen}.
This was  first proved by McCloskey~\cite{MC65} for PD($0,\theta$), 
by Perman in \cite{PePhd} for  PD($\alpha,0$) distributions with $\alpha \in (0,1)$ , and finally by Perman, Pitman and Yor in \cite{PPY92} for general PD distributions using a method based in Markov chain theory. Bertoin in \cite{Be06book} gave an alternative proof using Palm theory.

 \begin{corollary}[McCloskey, Perman, Pitman, Yor]
  For each $\alpha \in[0,1), \theta +\alpha >0$, there exists
 a sequence of independent random variables $\tilde Y_n$ such that $\tilde Y_n \sim\tbeta(1-\alpha,n\alpha+\theta)$, and  so that PD($\alpha,\theta$) distribution has 
 the stick-breaking representation $\{\tilde V_n\}$ where
  \begin{equation}\label{eq:Y1Y2etc}
  	\tilde V_1 = \tilde Y_1, \quad 	\tilde V_n= (1- \tilde Y_1)\cdots ( 1 -\tilde Y_{n-1}) \tilde Y_n, \quad n \geq 2\,.
  \end{equation}

 \end{corollary}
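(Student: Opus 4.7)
The plan is to observe that this corollary is an immediate matching-of-parameters consequence of the two preceding results: Proposition~\ref{pro:GamBet1} identifies every PD($\alpha,\theta$) distribution as a member of the beta/gamma family with specific parameters, and Theorem~\ref{thm:GPi} converts any such beta/gamma perpetuity into a stick-breaking scheme. So the work reduces to a bookkeeping check that the parameters supplied by Proposition~\ref{pro:GamBet1} fit into the shape required by Theorem~\ref{thm:GPi}, and then to reading off the distribution of the resulting $\tilde Y_n$.

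Concretely, I would proceed as follows. By Proposition~\ref{pro:GamBet1}, $(\tilde V_n) = \nmap(G_n \Pi_n)$ where $\Pi_n = \prod_{i<n} U_i$, with $G_n \sim \tgamma(1-\alpha,1)$ independent, and $U_n \sim \tbeta(a_n,1)$ independent, $a_n = \theta + n\alpha$. To cast this into the hypothesis of Theorem~\ref{thm:GPi}, I set
\[
c = 1-\alpha, \qquad a_1^{\mathrm{Thm}} = \theta + \alpha,
\]
and keep the same $\alpha \ge 0$. Two sanity checks are needed: (a) that $a_1^{\mathrm{Thm}} = \theta + \alpha > 0$, which is exactly the standing hypothesis $\theta + \alpha > 0$; (b) that the recursion $a_n = a_1^{\mathrm{Thm}} + (n-1)\alpha$ from Theorem~\ref{thm:GPi} agrees with $a_n = \theta + n\alpha$, which it does since $(\theta+\alpha) + (n-1)\alpha = \theta + n\alpha$. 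Moreover $c+\alpha = 1$, so the condition $U_n \sim \tbeta(a_n, c+\alpha)$ of Theorem~\ref{thm:GPi} matches the $\tbeta(a_n,1)$ produced by Proposition~\ref{pro:GamBet1}, and $G_n \sim \tgamma(c,1) = \tgamma(1-\alpha,1)$ matches as well.

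Having verified all parameter identifications, I would then invoke Theorem~\ref{thm:GPi} directly: it yields a.s.\ convergence of the series $\sum_j G_j \Pi_j$, and produces the stick-breaking representation \eqref{eq:Y1Y2etc} with independent $\tilde Y_n \sim \tbeta(c, a_n) = \tbeta(1-\alpha, n\alpha + \theta)$, which is the claim.

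For the boundary case $\alpha = 0$, the same argument applies verbatim (Proposition~\ref{pro:GamBet1} handles it via the gamma-subordinator construction leading to \eqref{eq:tildVa}), so no separate treatment is needed. There is no real obstacle: the only thing one has to be careful about is keeping the two different meanings of ``$a_1$'' (the one in the statement of Theorem~\ref{thm:GPi} versus the sequence $a_n = \theta + n\alpha$ of Proposition~\ref{pro:GamBet1}) straight when aligning the hypotheses.
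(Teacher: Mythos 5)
Your proposal is correct and follows exactly the paper's route: the paper states the corollary as a one-line consequence of Proposition~\ref{pro:GamBet1} combined with Theorem~\ref{thm:GPi}, and you simply supply the explicit parameter identification $c=1-\alpha$, $a_1^{\mathrm{Thm}}=\theta+\alpha$ (so $c+\alpha=1$ and $a_n=\theta+n\alpha$) that makes the two results mesh. The bookkeeping is accurate, including the $\alpha=0$ boundary case.
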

 
Moreover, based on Theorem~\ref{thm:GPi} we propose the following  general class of residual allocation models.
  
 \begin{definition}
 Suppose that  for some  constants  $\alpha \geq 0,\  a_1>0$ and $  c>0$, $G_n \sim \tgamma(c,1),$ and $
 U_n \sim \tbeta(a_1+ (n-1) \alpha,c+\alpha),\ n \geq  1$,
 are all independent. Let $\Pi_j =   \prod_{l=1}^{j-1} U_l$, the random mass-partition generated by 
 \begin{align} \label{eq:tildVdef}
 	(\tilde V_n)   
 	= \nmap(G_n \Pi_n)
 \end{align}
has a stick-breaking representation as in \eqref{eq:Y1Y2gen} and we refer to it as 
the beta stick-breaking  
distribution or residual allocation model with parameters $\alpha,  a_1, c$, which we abbreviate as
 $\RAM(\alpha,  a_1, c)$. 
\COM{ which we abbreviate as $\beta$-s.b.($\alpha,  a_1, c$).}
 \end{definition}

Thus, an arbitrary PD($\alpha,\theta)$,  distribution corresponds to the $\RAM (\alpha,  \alpha+\theta, 1-\alpha)$ distribution for all $\alpha\in[0,1)$, $\theta+ \alpha >0$.

 \section{ Arcsine laws } \label{sec:PYthm}

\subsection{Mass-partitions generated by stable subordinators} \label{ssec:PYdist}

 Recall, for $\alpha \in (0,1)$, PD($\alpha,0$) distributions are derived from  the jumps of $\alpha$-stable subordinator or the points of the corresponding point process $N=\sum_{i=1}^\infty \delta_{T_i,P_i}$ which in this case 
 denotes a Poisson point process on $(0,\infty)^2$ with intensity measure 
$Leb \times \mu_\alpha$,
where $\mu_\alpha(u,\infty) =K u^{-\alpha}$ for any $u>0$, some $\alpha \in (0,1)$ and a contant $K>0$.  It will become clear that  the exact value of  the constant $K$ has no influence on any of the main results below.
The
right continuous inverse of the $\alpha$-stable subordinator $
\Sub_t = \sum_{T_i\leq t} P_i\,,$  $ t \geq 0,$ 
 is defined as a process
\begin{equation}\label{eq:Invs}
	\Inv_s = \Inv(s) = \inf\{ t : \Sub_t > s\}\,,
\end{equation}
which is known to be continuous and nondecreasing. 

The connection between the  excursions of Bessel processes and stable subordinator (and the  process $\Npp$ therefore) was  first established by Molchanov and Ostrovskii in \cite{Mo69}. For  $\alpha \in (0,1)$, the  points of $N$ correspond to the lengths of excursions of Bessel processes of dimension $d = 2 (1-\alpha)$, see \cite{PY92,PPY92,PY97}.
For Brownian motion and $\alpha =1/2$  this was already proved by L\'evy, note that  this also follows from the It\^ o's Poisson process representation of the Brownian excursions (see Le Gall \cite{LG10}).
	In that particular case, $T_i$'s and $P_i$'s above correspond to the times (on the local time scale) and the lengths of individual Brownian excursions. Moreover,  $(\Inv_s)$  corresponds to the  local time process of Brownian motion at 0, 
	see Revuz and Yor, \cite[Ch. XII]{RY99}, Proposition (2.5), while the points of increase of $(\Inv_s)$ coincide with the set of zeros of the underlying Brownian motion.

Consequently, as corollaries of the results in   \cite{PY92},  one can deduce many distributional properties of such excursions of Brownian motion, and Bessel processes in general.
Notable examples include the arcsine laws studied by L\'evy, Lamperti and  Dynkin among others. 
For instance in 1939, P. L\'evy made  the following remarkable observation:  the fraction of time Brownian motion $(B_t)$ spends positive 
has the same law
when stopped at a fixed time $T$ and when stopped at a random (inverse local) time $\Sub_s$  for which  $B_{\Sub_s} =0$.

Recall that the scaling property  \eqref{eq:ScalProp} of the Poisson process $\Npp$ implies
\eqref{eq:VofSeq1}.
Consequently, for an arbitrary $s>0$
\begin{equation}\label{eq:ppeta}
	\eta(s)  
	= \nmap \left( \{ P_i :  {T_i \leq s } \}
	\right)\,
\end{equation}
have the same random mass-partition we already met in \eqref{eq:VofSeq1}. We sometimes write $\eta =\eta(s)$.

There is an alternative way of generating a random discrete distribution from a given subordinator. One can consider the partition arising from the closure of the  image of the subordinator  intersected with a closed interval $[0,T]$ rescaled by $T>0$. 
The points in this  image split $[0,T]$ into countably many smaller intervals. Note that, by this construction, the last of these intervals has the length 
\[
A_T = T- \Sub({\Inv_T-}).
\] 
We refer to $A_T$ as the undershoot of the level $T$ by the subordinator $(\Sub_s)$. Note that in the Brownian interpretation, $A_T$ corresponds to the age of excursion in progress at time $T$. We introduce  also the overshoot of the subordinator as
\[
B_T = \Sub({\Inv_T}) - T.
\]

Applying \eqref{eq:ScalProp} with $c =T^{ -\alpha}$,
 for a fixed number $T>0$,  we obtain 
\begin{align*}
 \left(
\dsum_{i }
\delta_{T_i,P_i}; 
\Inv_1 \right) & \eind \left(
 \dsum_{i }
 \delta_{\frac{T_i}{T^\alpha},\frac{P_i}{T}} ; 
\inf\{ s : \sum_{T_i/T^\alpha \leq s}
P_i /T > 1 \}\right)
 \\
 & = \left(
\dsum_{i }
\delta_{\frac{T_i}{T^\alpha},\frac{P_i}{T}} ; 
\frac{1}{T^\alpha} \inf\{ s :  \sum_{T_i \leq s}
P_i > T  \}\right)=
\left(
\dsum_{i }
\delta_{\frac{T_i}{T^\alpha},\frac{P_i}{T}} ; 
\frac{1}{T^\alpha} \Inv_T \right).
\end{align*} 
Observe that
\[ 
1- \sum_{T_i < \Inv_T } {{P_i}/{T}}  = A_T/T \qand   \sum_{T_i \leq \Inv_T } {{P_i}/{T}} -1  = B_T/T,
\]
therefore   
\begin{equation}\label{eq:EqualDist1}
	\left(
	\dsum_{T_i <\Inv_1 }
	\delta_{P_i}; 
A_1,B_1
	\right)
	\eind
	\left( \dsum_{T_i < \Inv_T } \delta_{{P_i}/{T}}; 
A_T/T, B_T/T
	\right).
\end{equation}

This motivates  the following definition
\begin{equation}\label{eq:ppxi}
\xi = \xi(T) =  
\{{P_i}/{T} :\  {T_i < \Inv_T } \} \cup \{ 
A_T/T \}.
\end{equation}
Note that the points on the right hand side add up to one, so no self-normalization $\nmap$ is needed in this case.
Moreover, by  \eqref{eq:EqualDist1}, $\xi(T)$ has the same distribution for each $T>0$, and by the same argument, it follows that the distributions of  $\eta(s)$ and $\xi(T)$  do not depend on the constant $K$ in $\mu_\alpha(u,\infty) = K u^{-\alpha}$ either.  Therefore, 
for convenience we  set $K=  \Gamma(1-\alpha)^{-1}$ as before.

The generalized arcsine law of Pitman and Yor in \cite{PY92} claims that  
$\xi$ and $\eta$ have the same distribution.
In particular, the random distributions $\xi$ and $\eta$ both 
correspond to the Pitman-Yor distribution with parameter $\alpha$ introduced in \eqref{eq:VofSeq1}.
Although both mass-partitions  are  generated from the jumps $P_i$ of the same subordinator $(\Sub_s)$, their equality in distribution appears quite surprising at the first glance, since $\eta$ incorporates only completed jumps, while $\xi$ includes additional value  $A_T/T$   corresponding to the incomplete jump over a predetermined fixed threshold $T$.
Another  surprising fact, observed in \cite{PY92}, is that $A_T/T$ can be seen as a size-biased sample from the points $\{V_i\}$ of the process $\eta$.  More precisely, if conditionally on $\eta= \sum_i \delta_{V_i}  =\sum_{T_i \leq 1 } \delta_{P_i/ \Sub_1} $ we select $J \in \{ 1,2, \ldots \}$ at random with probability $V_j$,  denoting $\tilde V_1= V_J$ it follows that
\begin{equation*}\label{eq:X0YJ}
A_T/T \eind \tilde V_1\,.
\end{equation*}

\subsection{Thinning} \label{ssec:Thin}
Recall, the exponential marking  of 
the process $\Npp= \sum_i \delta_{T_i,P_i}$ in \eqref{eq:NnaE}, produced a new Poisson process
$	N^E = \dsum_i  \delta_{T_i,P_i,E_i}$.
From, $N^E$, one can derive two thinnings of $\Npp$
\begin{align}
N^\wedge& 
= \dsum_i \delta_{T^\wedge_i,P^\wedge_i}  =
\dsum_i \delta_{T_i,P_i} \1{P_i>E_i} \qand  \nonumber \\
N^\vee & = \dsum_i \delta_{T^\vee_i,P^\vee_i}  =\dsum_i \delta_{T_i,P_i} \1{P_i\leq E_i}.\label{eq:Nvee}
\end{align}

It is immediate, see \cite[Corollary 5.9]{LP18}, that $N^\wedge$ and $N^\vee$ are two independent Poisson processes on $(0,\infty)^2$. Moreover, the corresponding intensity measures are product of the Lebesgue measure with measures
\[ 
\nu^\wedge (A) = \nu^\wedge_\alpha (A) = \int_A \mu_\alpha(dx) \int_{s< x} e^{-s} ds =
\int_A \mu_\alpha(dx)(1- e^{-x}),
\]
\[	
\nu^\vee (A) = \nu^\vee_\alpha (A) =\int_A \mu_\alpha(dx) \int_{s \geq x} e^{-s} ds =
\int_A \mu_\alpha(dx)e^{-x},
\]
respectively. Furthermore,
\[ 
N^{\wedge,E}  =\sum_j \delta_{T^\wedge_j,P^\wedge_j,E^\wedge_j} = \dsum_i \delta_{T_i,P_i,E_i} \1{P_i>E_i}
\]
is also a Poisson process with the intensity measure $\nu^{\wedge,E}$ on $(0,\infty)^3$, where
\[ 
\nu^{\wedge,E} (D\times C_1 \times C_2 ) = |D| \int_{C_1} \mu_\alpha(dx) \int_{s\in (0,x) \cap C_2 } e^{-s} ds 
= |D| \, \mu_\alpha^{(2)}( C_1 \times C_2 )
\]
where $|D|$ denotes the Lebesgue measure of  set $D$ and $\mu_\alpha^{(2)}$ denotes  a probability distribution (by the choice of $K$) on $(0,\infty)^2$ given by $\mu_\alpha^{(2)} ( C_1 \times C_2 ) =
\int_{C_2 } e^{-s} ds  \mu_\alpha(C_1 \cap (s,\infty))$.
It follows that  $N^{\wedge,E}$ can be viewed as a unit rate homogeneous Poisson process $T^\wedge_1 \leq T^\wedge_2 \leq \ldots$, 
whose points are  independently marked with i.i.d. points $(P^\wedge_j,E^\wedge_j) \in (0,\infty)^2$ with the law $\mu_\alpha^{(2)}$.
Moreover, 
\[ 
(P^\wedge_j,E^\wedge_j)_j \eind ( E^\wedge_j /U_j^{1/\alpha},E^\wedge_j)_j 
 \]
where  $(E^\wedge_j )$ is an i.i.d. sequence of $\tgamma (1-\alpha,1)$ distributed random variables independent
of an i.i.d. sequence $(U_j)$ from uniform distribution on the interval $(0,1)$.
In particular, the first arrival time
$
T^\wedge_1 
$
has the standard exponential distribution  and is independent of $E^\wedge_1$.

 For a fixed $s>0$, the spatial component of $N^\vee$
 up to the time $s$  denoted by
 \[ 
	N^\vee_s =  \sum_{T^\vee_i \leq s} \delta_{P_i^\vee} 
  \]
  has a simple representation in terms of  
  the unit rate homogeneous Poisson process   $\{\Gamma_i\}$   and an independent i.i.d. sequence $(G_n)$ of $\tgamma(1-\alpha,1)$ distributed random variables. Indeed, denote 
\begin{equation*}\label{eq:ppMs}
	 M^\vee_s =	\dsum_{i \geq 1} \delta_{G_i (s/(s+\Gamma_i))^{1/\alpha} }  \,.
\end{equation*}
By \eqref{eq:NEkrozPrep}, this can be viewed as the distribution of
$ \sum_{i\geq 2}
 \delta_{\tilde P_i} $ conditionally on $Z_1^\alpha = s$, although we do no use this fact  in the sequel. 
  
 \begin{lemma}\label{lem:NveeRep}
 	For  any $\alpha\in (0,1)$, the point processes $	N^\vee_s \sim \PPP(s \cdot \nu^\vee) $
and $ M^\vee_s$ above satisfy
\begin{equation}\label{eq:NveeRep}
	N^\vee_s \eind M^\vee_s. 
\end{equation}
 \end{lemma}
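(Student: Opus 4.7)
The plan is to recognize $M^\vee_s$ as the image of a two-dimensional Poisson point process under a deterministic map and identify the pushforward intensity via a direct change of variables. Observe that $\Xi := \sum_i \delta_{\Gamma_i, G_i}$ is a $\PPP$ on $(0,\infty)^2$ with intensity $dt \otimes \Gamma(1-\alpha)^{-1} g^{-\alpha} e^{-g}\, dg$, since $(\Gamma_i)$ and $(G_i)$ are independent. By construction $M^\vee_s$ is the image of $\Xi$ under the map $\Phi_s(t,g) := g(s/(s+t))^{1/\alpha}$. The mapping theorem for Poisson processes then yields that $M^\vee_s$ is itself a $\PPP$ on $(0,\infty)$ with intensity equal to the pushforward $(\Phi_s)_\ast(dt \otimes \Gamma(1-\alpha)^{-1} g^{-\alpha} e^{-g}\, dg)$, so the lemma reduces to checking that this pushforward measure equals $s \cdot \nu^\vee$.

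For the computation, fix $t$ and substitute $y = g(s/(s+t))^{1/\alpha}$, so that $g = y(1+t/s)^{1/\alpha}$ and $dg = (1+t/s)^{1/\alpha}\, dy$. The inner $g$-density then becomes
$\Gamma(1-\alpha)^{-1} y^{-\alpha}(1+t/s)^{1/\alpha - 1} e^{-y(1+t/s)^{1/\alpha}}$.
Applying Fubini and then substituting $v = (1+t/s)^{1/\alpha}$ in the $t$-integral (so $dt = s\alpha v^{\alpha-1}\, dv$ and $v^{\alpha-1} = (1+t/s)^{1-1/\alpha}$), the residual powers of $(1+t/s)$ cancel exactly and the integral collapses to $s\alpha \int_1^\infty e^{-yv}\, dv = s\alpha e^{-y}/y$. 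Consequently, the pushforward has density $s \cdot \alpha e^{-y}/(\Gamma(1-\alpha) y^{\alpha+1})$ at $y>0$, which is exactly $s$ times the density of $\nu^\vee(dy) = e^{-y} \mu_\alpha(dy)$.

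Two Poisson point processes with identical intensity measures are equal in distribution, which establishes \eqref{eq:NveeRep}. No step is really hard: the argument is a routine change of variables once $M^\vee_s$ is identified as a pushforward of the product $\PPP$ built from $(\Gamma_i)$ and $(G_i)$. The only mild subtlety lies in arranging the two substitutions so that the powers of $(1+t/s)$ telescope, reducing the two-dimensional integral to a one-dimensional exponential that yields the desired $e^{-y}/y$ factor.
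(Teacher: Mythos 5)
Your proof is correct and follows essentially the same strategy as the paper: both identify $M^\vee_s$ as a Poisson process and verify that its intensity equals $s\,\nu^\vee$. The only difference is computational — the paper evaluates the expected number of points above $u$ (giving $s\,\EE((G_1/u)^\alpha-1)_+$) and then uses one integration by parts on the tail, whereas you compute the pushforward density directly via two substitutions that make the $(1+t/s)$ powers telescope; both routes are routine and equivalent.
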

\begin{proof}
	The point process $M^\vee_s$  is clearly a Poisson process. Observe that
\begin{align*}
 \EE  M_s(u,\infty) = 
  s\,   \EE \left( \left( \frac{G_1}{u} \right)^\alpha -1  \right)_+ =
  s\,    \left( \frac{u^{-\alpha}}{\Gamma(1-\alpha)} e^{-u} - \PP(G_1 > u) \right) 
  = s \nu^\vee( u,\infty),
\end{align*}
where we use  partial integration in the last equality. Thus $M^\vee_s$  and $N^\vee_s$ are two Poisson processes with equal intensity measure.
\end{proof}

\subsection{Pitman-Yor theorem}

\begin{theorem}[Pitman and Yor]\label{thm:1}
	Assume $\alpha\in (0,1)$, for all constants $K, \, T, \, s >0$,
	 the point processes 
	 in \eqref{eq:ppeta} and \eqref{eq:ppxi} have the standard Pitman-Yor distribution, in particular
	\begin{equation}\label{eq:xieta}
		\xi(T) 
\eind \eta(s)\,.
	\end{equation}
	
\end{theorem}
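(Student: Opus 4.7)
The plan is to combine the exponential marking and thinning decomposition $N = N^\vee + N^\wedge$ of Section~\ref{ssec:Thin} with Lemma~\ref{lem:NveeRep}, in order to realize the law of $\xi$ as an explicit Poisson functional and match it against the representation \eqref{eq:tildV} of $\eta$. As a preliminary, the scaling identities \eqref{eq:VofSeq1} and \eqref{eq:EqualDist1} imply that neither the law of $\eta(s)$ nor that of $\xi(T)$ depends on $s$, $T$, or $K$, so I would fix $K = 1/\Gamma(1-\alpha)$ throughout; with this normalization $\int(1-e^{-x})\mu_\alpha(dx) = 1$, so in particular $T_1^\wedge \sim \texp(1)$.

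The central construction is an adaptive choice of threshold. Define
\[
T^* := \tau^\vee(T_1^\wedge) + E_1^\wedge,
\]
where $T_1^\wedge$ is the first arrival time of $N^\wedge$ and $E_1^\wedge$ is its exponential mark. Since the only jump of $\tau$ in $[0, T_1^\wedge]$ contributed by $N^\wedge$ occurs at time $T_1^\wedge$ and has size $P_1^\wedge > E_1^\wedge$, the subordinator first crosses $T^*$ exactly at $T_1^\wedge$, with undershoot $A_{T^*} = E_1^\wedge$. Consequently
\[
\xi(T^*) = \nmap\bigl(\{P_i^\vee : T_i^\vee < T_1^\wedge\} \cup \{E_1^\wedge\}\bigr).
\]

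Conditioning on $T_1^\wedge = s$, Lemma~\ref{lem:NveeRep} gives $\{P_i^\vee : T_i^\vee < T_1^\wedge\} \eind \{G_i(s/(s+\Gamma_i))^{1/\alpha}\}_{i\geq 1}$, with $E_1^\wedge \sim \tgamma(1-\alpha,1)$ independent. Introducing $G_0 := E_1^\wedge$, $\tilde\Gamma_0 := s$ and $\tilde\Gamma_i := s + \Gamma_i$ for $i \geq 1$, both families combine uniformly as $\{s^{1/\alpha} G_i / \tilde\Gamma_i^{1/\alpha}\}_{i \geq 0}$, and the factor $s^{1/\alpha}$ vanishes under self-normalization. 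Unconditionally $\tilde\Gamma_0 = T_1^\wedge \sim \texp(1)$ and the increments $\tilde\Gamma_i - \tilde\Gamma_{i-1}$ are iid exponentials, so $(\tilde\Gamma_i)_{i \geq 0}$ is itself a unit-rate Poisson process. Re-indexing and comparing with \eqref{eq:tildV}--\eqref{eq:ZeindGama} then delivers
\[
\xi(T^*) \eind \nmap\bigl(\{G_i/\Gamma_i^{1/\alpha}\}_{i\geq 1}\bigr) \eind \eta(1).
\]

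The main obstacle is transferring this identity from the random threshold $T^*$ (which is a measurable functional of $\tau$) to a deterministic $T$. The scaling property yields $\xi(T) \eind \xi(T')$ for deterministic thresholds and, by conditioning, also for random thresholds \emph{independent} of $\tau$, but $T^*$ is not independent of the subordinator, so this does not apply directly. I expect to resolve this by noting that $\xi(T^*)$ is determined by the \emph{ratios} of the points $\{P_i^\vee\} \cup \{E_1^\wedge\}$ while $T^*$ is their total sum, and invoking the classical independence, for $\alpha$-stable random measures, between the self-normalized point configuration and the total mass; once this decoupling $T^* \perp \xi(T^*)$ is in place, the identity $\xi(T^*) \eind \eta(1)$ extends to fixed $T$ by a routine conditioning argument, completing the proof.
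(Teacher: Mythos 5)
Your construction is, step for step, the one the paper uses: the thinning decomposition $N = N^\vee + N^\wedge$, the adaptive threshold $T^* = \sum_{T_i^\vee \le T_1^\wedge} P_i^\vee + E_1^\wedge$ (the paper's $\Gamma$ from \eqref{eq:Gamma}), the observation that $A_{T^*} = E_1^\wedge$ and hence $\xi(T^*) = \nmap(\{P_i^\vee : T_i^\vee < T_1^\wedge\}\cup\{E_1^\wedge\})$, and the identification of the law of $\xi(T^*)$ with PD$(\alpha,0)$ by conditioning on $T_1^\wedge$ and applying \Cref{lem:NveeRep} together with \eqref{eq:tildV}--\eqref{eq:ZeindGama}. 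All of that is correct and matches the paper.

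The gap is in the step you yourself flag as the main obstacle: passing from the random threshold $T^*$ to a deterministic $T$. What the paper establishes (\Cref{lem:2}, via \Cref{lem:1}) is that $\Gamma = T^*$ is \emph{standard exponential and independent of the entire point process $N$} — in fact the conditional law of $T^*$ given $N$ is unit exponential, which is a consequence of the memoryless property of the marks $E_i$. That independence $T^* \perp N$ (equivalently $T^* \perp \tau$) is exactly what lets you disintegrate: conditionally on $T^*=u$, the subordinator has its unconditional law, so $\xi(T^*)$ given $T^*=u$ has the law of $\xi(u)$, which by \eqref{eq:EqualDist1} does not depend on $u$. What you propose instead is to establish the \emph{weaker} statement $T^* \perp \xi(T^*)$ and then appeal to the classical independence of total mass and self-normalized jumps for an $\alpha$-stable subordinator. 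There are two problems. First, that classical fact applies to the full set of jumps over a fixed time window under a scale-invariant Lévy measure $cx^{-1-\alpha}dx$; here the configuration $\{P_i^\vee : T_i^\vee < T_1^\wedge\}\cup\{E_1^\wedge\}$ is drawn from the tempered measure $\nu^\vee \propto x^{-1-\alpha}e^{-x}dx$, over the \emph{random} time window $[0,T_1^\wedge)$, with the added non-jump point $E_1^\wedge$, so that result does not apply. Second, even if you could show $T^*\perp\xi(T^*)$ by some route, it would not close the argument: $\PP(\xi(T^*)\in A \mid T^*=u)$ is in general \emph{not} equal to $\PP(\xi(u)\in A)$, because conditioning on the value of $T^*$ — which is a functional of $\tau$ and the marks — changes the conditional law of $\tau$. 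The conditioning is only "routine" once one knows $T^*\perp\tau$. So you need the paper's \Cref{lem:2} (or an equivalent statement), and the independence to aim for is $T^*\perp N$, not $T^*\perp\xi(T^*)$.
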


\begin{proof}
	
	Recall that  $\Npp = \sum_i \delta_{T_i,P_i} = N^\vee + N^\wedge $.  We observed above that  $T^\wedge_1$ has the standard exponential distribution. Denote now
	\begin{equation}\label{eq:Gamma}
		\Gamma = \sum_{T^\vee_i \leq T^\wedge_1} P_i^\vee + E_1^\wedge\,.
	\end{equation} 
	By the definition  $T^\wedge_1 = \Inv_\Gamma $ almost surely, see \eqref{eq:Invs}.
	It turns out that $\Gamma $ itself is again a standard exponential random variable. This might be obvious in the light of the memoryless property of the exponential distribution of $E_i$'s. And moreover, random variable $\Gamma$ is also independent of $\Npp$, see \Cref{lem:2}.
	Thus,
 by \eqref{eq:EqualDist1} 
 it suffices to show  $\xi(\Gamma) $ has the same distribution as $ \eta(1)$. 
 
Observe that
\begin{equation}\label{eq:AGamBGam}
	 A_\Gamma = E_1^\wedge \eind G_1 \sim \tgamma(1-\alpha,1)
\end{equation}
is independent of $T^\wedge_1$ and $N^\vee$.

Assume $(G_n)$ is an i.i.d. sequence from $ \tgamma(1-\alpha,1)$ distribution,  and that $(\Gamma_i)$  represent the points of the unit rate homogeneous Poisson process as in \eqref{eq:tildV}. Recall $	N^\vee_{T^\wedge_1} =  
\sum_{T^\vee_i \leq T^\wedge_1} \delta_{P_i^\vee}$, hence, by \Cref{lem:NveeRep}
\begin{equation}\label{eq:Nveeref}
	  N^\vee_{T^\wedge_1} \eind  \left\{ 
   G_n \left(  \frac{\Gamma_1}{\Gamma_1+\Gamma_n-\Gamma_1}  \right) ^{1/\alpha}
  \right\}_{n\geq 2}\,,
\end{equation}
since both sides can be seen as Cox processes, that is Poisson processes mixed over  equally distributed random values, i.e. $T^\wedge_1$ and $\Gamma_1$  
respectively. Thus, in the notation of \eqref{eq:NEkrozPrep}
\begin{equation}\label{eq:ANwedgeRem}
	 \left(  N^\vee_{T^\wedge_1} ; T^\wedge_1,A_\Gamma \right)
 \eind 
 \left(
  \left\{ G_n \left( \frac{\Gamma_1}{\Gamma_n}  \right) ^{1/\alpha}
  \right\}_{n\geq 2}  ;  \Gamma_1,G_1\right)
  =\left(
  \left\{ G_n \frac{Z_1}{Z_n} 
  \right\}_{n\geq 2}  ;  Z_1^{\alpha},G_1 \right)
\end{equation}
with the last component independent of the other two on  both sides.
Therefore, by \eqref{eq:tildV}
\begin{equation*}
	\xi(\Gamma) = \nmap \left(  
	N^\vee_{T^\wedge_1} \cup \{ A_\Gamma\}
	\right)
	\eind \nmap\left( \left\{ G_n
	\frac{Z_1}{Z_n}
	\right\}_{n\geq 2} \cup \{ G_1 \} \right) =
 \nmap(G_n /Z_n) = \eta(1).
\end{equation*}

\end{proof}

\begin{figure}[!h]
	\centering
	\setlength{\unitlength}{0.1\textwidth}
	\begin{picture}(10,9)
		\put(0,0){\includegraphics[width=\textwidth]{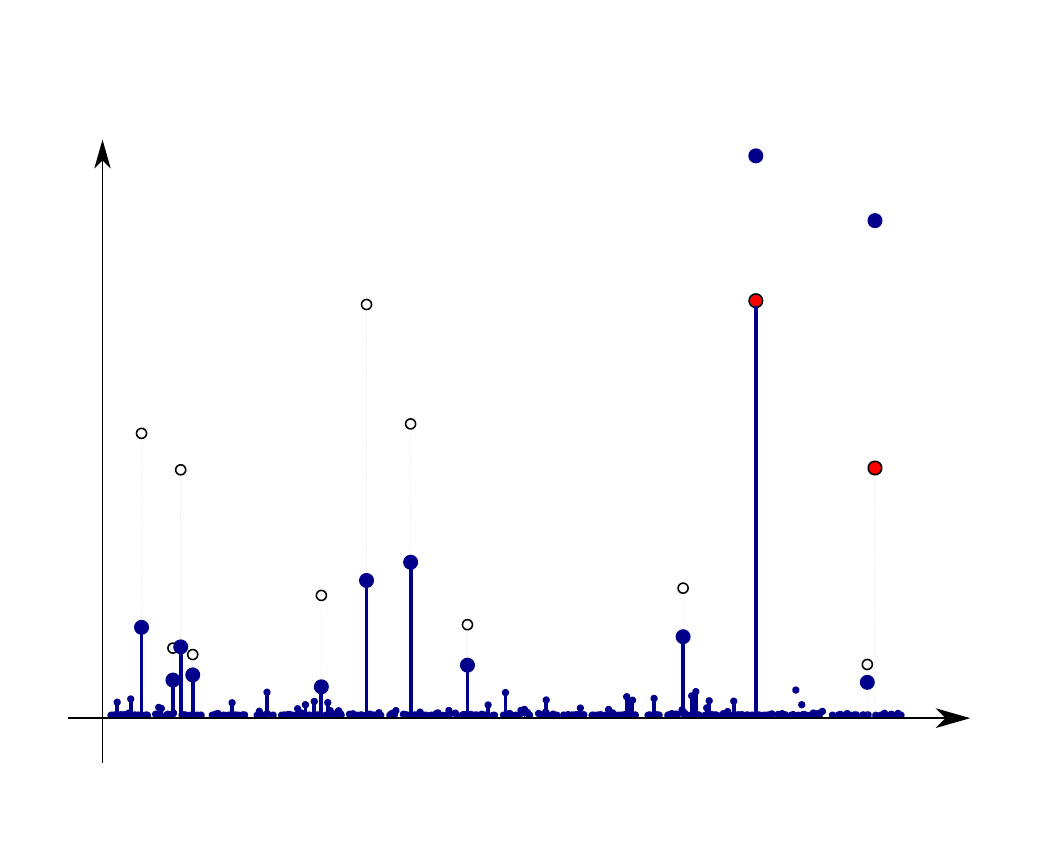}}
		\put(6.7,6.85){$P^\wedge_1$}
		\put(6.7,5.35){$E^\wedge_1$}
		\put(7.25,1.1){$T^\wedge_1$}
	\end{picture}
\vspace{-2.2cm}
	\caption{An illustration of the points of $N^\vee$ and $N^\wedge$ in the proof of \Cref{thm:1} (in blue).
	By other circles  (in white and red) we denote corresponding exponential marks for  the points above a certain threshold.
	Note that for points of  $N^\wedge$, their exponential mark in red falls below their respective blue point.  Random variable $\Gamma$ in \eqref{eq:Gamma} corresponds to the total length of vertical blue lines.}
	\label{fig:PYproof}
\end{figure}

The relation \eqref{eq:ANwedgeRem} in the proof seems    remarkable in its own right. 
Observe that the undershoot and overshoot random variables 	$A_\Gamma$ and $B_\Gamma$ satisfy
\begin{equation}\label{eq:ABGam}
	 A_\Gamma + B_\Gamma =P_1^\wedge = E_1^\wedge/U_1^{1/\alpha}.
\end{equation}

\begin{theorem} \label{cor:AGammaV}
 For each $\alpha \in (0,1)$, for the size biased ordering 
 $(\tilde V_n)$ of the Pitman-Yor  process in \Cref{thm:1}
 and an independent $U\sim \tunif(0,1)$, 
  it holds that 
\begin{equation}\label{eq:ANnormV}
\left( \frac{A_T}{T},\sum_{T_i < L_T} \delta_{P_i/T} , \frac{B_T}{T} \right) 
 \eind	\left( \frac{A_\Gamma}{\Gamma}  ,
	\frac{N^\vee_{T^\wedge_1}}{\Gamma},\frac{B_\Gamma}{\Gamma} \right)
	\eind \left(\tilde V_1,  \{ \tilde V_j \}_{j\geq 2}, \tilde V_1 \left( \frac{1}{ U^{1/\alpha}} -1 \right)\right)
\end{equation}		
in the notation of \eqref{eq:ppxi}.
\end{theorem}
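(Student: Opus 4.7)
The plan is to chain three equalities in distribution, leaning heavily on structural facts already established in the proof of \Cref{thm:1}. For the first equality, the scaling identity \eqref{eq:EqualDist1} shows that for any deterministic $T>0$ the joint distribution of $(A_T/T,\sum_{T_i<\Inv_T}\delta_{P_i/T},B_T/T)$ does not depend on $T$. Since $\Gamma$ is independent of the driving Poisson process $\Npp$ by the referenced \Cref{lem:2}, conditioning on $\Gamma=t$ and integrating yields the asserted first equality.

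For the second equality, I would feed \eqref{eq:ANwedgeRem} directly into the normalization by $\Gamma$. Under the identification in \eqref{eq:ANwedgeRem},
\[
\Gamma \;=\; A_\Gamma + \sum_{T_i^\vee \leq T_1^\wedge} P_i^\vee \;\eind\; G_1 + \sum_{n\geq 2} G_n\, Z_1/Z_n \;=\; Z_1\sum_{n\geq 1} G_n/Z_n,
\]
so self-normalizing the union $N^\vee_{T_1^\wedge}\cup\{A_\Gamma\}$ by $\Gamma$ gives exactly $\nmap(G_n/Z_n)$, which by \eqref{eq:tildV} is the size-biased Pitman--Yor sequence $(\tilde V_n)$. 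In this identification $A_\Gamma/\Gamma$ is the term corresponding to index $1$, hence equals $\tilde V_1$, while $N^\vee_{T_1^\wedge}/\Gamma$ collects the remaining points $\{\tilde V_j\}_{j\geq 2}$.

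For the third component, I use \eqref{eq:ABGam} to write $B_\Gamma = A_\Gamma(1/U_1^{1/\alpha}-1)$, whence $B_\Gamma/\Gamma = (A_\Gamma/\Gamma)(1/U_1^{1/\alpha}-1)$. The main obstacle, and essentially the only point requiring care, is to establish independence of $U_1$ from the pair $(A_\Gamma/\Gamma, N^\vee_{T_1^\wedge}/\Gamma)$. Tracing the thinning construction in \Cref{ssec:Thin}, the marks factor as $(P_j^\wedge,E_j^\wedge)\eind(E_j^\wedge/U_j^{1/\alpha},E_j^\wedge)$ with $(U_j)$ i.i.d.\ $\tunif(0,1)$ independent of $(E_j^\wedge)$ and of the arrival times $T_j^\wedge$. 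Independence of $N^\vee$ and $N^\wedge$ then makes $U_1$ independent of $(N^\vee_{T_1^\wedge}, T_1^\wedge, E_1^\wedge)$, and since $A_\Gamma=E_1^\wedge$ and $\Gamma$ is a function of $N^\vee_{T_1^\wedge}$ and $E_1^\wedge$, also of $(A_\Gamma,\Gamma)$. Setting $U=U_1$ closes the chain.
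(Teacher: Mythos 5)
Your proof is correct and follows essentially the same route as the paper's: both chain \eqref{eq:EqualDist1} and \Cref{lem:2} for the first equality, and both reduce the second to \eqref{eq:ANwedgeRem}, \eqref{eq:tildV} and \eqref{eq:ABGam}. The only cosmetic difference is that the paper passes through the unnormalized intermediate identity \eqref{eq:ANnormP} (normalizing by $(T_1^\wedge)^{1/\alpha}$ and then applying $\nmap$), whereas you normalize directly by $\Gamma$; and you spell out the independence of $U_1$ from $(A_\Gamma,N^\vee_{T_1^\wedge},\Gamma)$, which the paper leaves implicit.
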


\begin{proof}
	The first equality in distribution follows from \eqref{eq:EqualDist1} and the proof of the previous theorem.
 Directly from \eqref{eq:tildV} and \eqref{eq:ANwedgeRem} it follows that  
 \begin{equation}\label{eq:ANnormP}
 	\left( \frac{A_\Gamma}{T^{\wedge 1/\alpha}}  ,
 	\frac{N^\vee_{T^\wedge_1}}{T^{\wedge 1/\alpha}},
 	\frac{B_\Gamma}{T^{\wedge 1/\alpha}} \right)
 	\eind \left(\tilde P_1,   \{ \tilde P_j \}_{j\geq 2},\tilde P_1 \left( \frac{1}{ U^{1/\alpha}} -1 \right) \right).
 \end{equation}	
Applying the self-normalization by the sum of points in  two first components on both sides yields the claim.
\end{proof}

	Pitman and Yor in \cite{PY92} immediately gave two proofs of \Cref{thm:1}.
	The first of them relies on a special identity for Palm distributions of the corresponding  Poisson processes (a similar argument appears in \cite{PPY92} as well).  	Later,  Pitman gave a combinatorial  argument for the theorem, see \cite{Pi97} as well as Theorem 4.7 and Exercise 4.4.1 in \cite{Pi06book}. 
	The thinning proof above is related to the  proof in Section 3 of \cite{PY92}, however the geometric approach 
	suggested here is entirely constructive and avoids analysis of  conditional distributions of appropriate point processes. On top of this, it has a number of simple corollaries completing the picture of the excursions of Bessel processes as we show next.

\subsection{Excursions of Bessel processes}

 Assume that $(X_s)$ is a Bessel process of dimension $d= 2 (1-\alpha) \in (0,2)$. With $(L_s)$ representing the local time of  the Bessel process at 0, the lengths of its excursions away from 0 correspond to the jumps of
 the inverse local time process $\Sub_t =\inf\{s:L_s >t \}$, $t \geq 0$. 
 As we observed earlier, see also \cite{Mo69,PPY92,Pi06book},  $(\Sub_t)$ is an
  $\alpha$-stable subordinator which can be represented as
 \[ 
 \Sub_t = \sum_{T_i \leq t} P_i, \quad t\geq 0.
 \] 
 One can define the left and the right end point of the excursion straddling a fixed time point $T$ as
\begin{gather*}
	g_T= \sup \{ s < T : X_s = 0 \} \qand
	d_T= \inf \{ s > T : X_s = 0 \}.
\end{gather*} 
Both values can be expressed in terms of the
corresponding subordinator as
\begin{gather*}
	g_T=  \Sub(L_{T} -) \qand
	d_T= \Sub(L_{T})\,,
\end{gather*}
see (1.6) in \cite{Be99} for instance. 
In this notation $A_T = T- g_T$ and $B_T = d_T - T$.
Observe that $\Delta_T= A_T+B_T$ corresponds to the total length of the excursion straddling time $T$.
In the case $\alpha=1/2$, corresponding to Brownian motion, the
joint distribution of $(g_T,d_T)$ is well known. But it was also studied for $T=\bex$, where $\bex$ denotes an independent unit exponential random variable. In Winkel~\cite{Wi05}, one can find the Laplace transform of the following seven-dimensional  random vector
\begin{equation}\label{eq:7-tup}
 (\bex, L_{\bex}, B_{\bex}, A_{\bex}, g_{\bex} , d_{\bex},
\Delta_{\bex}) =	 (\bex, L_{\bex}, B_{\bex}, A_{\bex}, \tau(L_{\bex} -), \tau(L_{\bex} ),
 \Delta_{\bex}),
\end{equation}
even in a more general case of subordinator $(\tau_t)$ associated with a general Bernstein function $\Phi$, so that
$\EE e^{-\lambda \tau _t} = e^{- t \Phi(\lambda )}$. In the case of Bessel  process $(X_t)$ this random vector has a surprisingly simple structure.

\begin{proposition} \label{thm:7tup}
	Suppose that random variables $G_1\sim \tgamma(1-\alpha,1)$, $U\sim \tunif(0,1)$, $T_1^\wedge \sim \texp(1)$ and point process
	$N^\vee$  as in Subsection \ref{ssec:Thin} are all independent. Then the random vector in \eqref{eq:7-tup}
	has the same distribution as
	\[ 
	\left(
	\Gamma,
	T_1^\wedge, G_1 (U^{-1/\alpha}-1),G_1,
	G_0,  G_0 + G_1 U^{-1/\alpha},
	G_1 U^{-1/\alpha}
	\right)
	\]
	where $G_0 =  \sum_{T^\vee_i \leq T_1^\wedge} P_i^\vee\sim  \tgamma(\alpha,1)$ and
	$\Gamma =G_0+G_1$. 
\end{proposition}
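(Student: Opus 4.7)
The plan is to realise the independent exponential time $\bex$ as the random variable $\Gamma = G_0 + G_1$ constructed in the proof of \Cref{thm:1}, with $G_1 := E_1^\wedge$ and $G_0 := \sum_{T_i^\vee \leq T_1^\wedge} P_i^\vee$. The cited \Cref{lem:2} guarantees that $\Gamma$ has the standard exponential distribution \emph{and} is independent of the entire Poisson process $\Npp$; this is exactly what allows us to use $\Gamma$ in place of the external independent exponential time $\bex$ and then read off the remaining six coordinates of \eqref{eq:7-tup} directly from the sample path of $\Sub$.

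Next I would identify each coordinate pathwise. By construction $T_1^\wedge = \Inv_\Gamma = L_\Gamma$, which supplies the second coordinate. Equation \eqref{eq:AGamBGam} gives the undershoot $A_\Gamma = E_1^\wedge = G_1$. Writing $P_1^\wedge = E_1^\wedge / U_1^{1/\alpha}$ with $U := U_1 \sim \tunif(0,1)$ independent (as noted in Subsection~\ref{ssec:Thin}), \eqref{eq:ABGam} yields the overshoot $B_\Gamma = P_1^\wedge - A_\Gamma = G_1(U^{-1/\alpha} - 1)$. The remaining three coordinates are then pathwise arithmetic: $g_\Gamma = \Sub(T_1^\wedge -) = G_0$, $d_\Gamma = \Sub(T_1^\wedge) = G_0 + P_1^\wedge = G_0 + G_1 U^{-1/\alpha}$, and $\Delta_\Gamma = A_\Gamma + B_\Gamma = G_1 U^{-1/\alpha}$.

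It then remains to verify mutual independence of $(G_0, G_1, U)$ and to identify the law of $G_0$. For independence, $G_0$ is a measurable function of $N^\vee$ and of the arrival time $T_1^\wedge$ alone; since $N^\vee$ and $N^\wedge$ are independent, and since within $N^\wedge$ the arrival time $T_1^\wedge$ is independent of the first mark $(E_1^\wedge, U_1)$, we conclude that $G_0$ is independent of $(G_1, U)$. That $G_1 = E_1^\wedge$ and $U = U_1$ are themselves independent is built into the representation of $P_1^\wedge$ recorded in Subsection~\ref{ssec:Thin}. For the marginal of $G_0$, since $G_1 \sim \tgamma(1-\alpha,1)$ is independent of $G_0$ and their sum $\Gamma$ has the $\texp(1) = \tgamma(1,1)$ law, a Laplace transform comparison forces $G_0 \sim \tgamma(\alpha,1)$, as claimed.

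The one genuinely nontrivial input is the full independence assertion of \Cref{lem:2}: it is intuitive that $\Gamma$ is $\texp(1)$ via a memoryless argument for the exponential marks $E_i$, but the simultaneous independence of $\Gamma$ from the entire point process $\Npp$ is what allows the pathwise decomposition above to produce the claimed \emph{joint} law of the seven-tuple. Once that lemma is in place, the rest of the argument reduces to book-keeping along the ladder of $\Sub$ combined with the elementary beta--gamma identity $\tgamma(\alpha,1) \ast \tgamma(1-\alpha,1) = \texp(1)$.
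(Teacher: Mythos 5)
Your proposal is correct and takes essentially the same approach as the paper's proof of this proposition: realize the independent exponential time via $\Gamma$ from the proof of \Cref{thm:1} (justified by \Cref{lem:2}), identify the seven coordinates pathwise, and pin down the law of $G_0$ by cancellation in the convolution $G_0 + G_1 = \Gamma$. You have merely spelled out the independence bookkeeping more explicitly than the paper's terse one-paragraph argument.
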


\begin{proof}
	In the proof of \Cref{thm:1} simply substitute $\Gamma$ with $\bex$  and recall that $\Delta_\Gamma= A_\Gamma+ B_\Gamma = G_1/U_1^{1/\alpha}$ where $G_1= E^\wedge_1 \sim \tgamma(1-\alpha,1)$  and that 
	$G_0 = \sum_{T^\vee_i \leq T^\wedge_1} P^\vee_i  =g_\Gamma$ is independent of it. Since $G_0+G_1 = \Gamma$, it follows that $G_0 \sim \tgamma(\alpha,1)$. In the end, note
 that $d_\Gamma = g_\Gamma + \Delta_\Gamma$.
	
\end{proof}


The one dimensional equality in distribution  $\Delta_{\bex} \eind  G_1 U^{-1/\alpha} $ represents the key finding of Theorem 1.1 in
Bertoin, Fujita, Roynette and Yor  \cite{BFRY06}. The authors recognize 
 that this distribution can be seen as a perpetuity 
 itself
  and derive several other distributions related to it.
  The distribution of $\Delta_{\bex}$ was called BFRY law
 by Devroye and James in \cite{DJ14}  who also discuss other BFRY distributions.
   BFRY distribution was later extended to derive a new  class of random discrete distributions, see  Ayed et al. \cite{ALC19}.
   
 
Equipped  with \Cref{thm:7tup},  one can deduce a result concerning the excursion of the   Bessel process of dimension $d\in (0,2)$ straddling an arbitrary fixed time $T$.  Recall that the subordinator $(\Sub_t)$ tracks excursions of $(X_s)$ away from 0 and that by \Cref{cor:AGammaV}
 \[
 \sum_{T_i < L_T} \delta_{P_i/T}  
 \eind  \{ \tilde V_n \}_{n\geq 2}.
 \]
 Moreover, if we apply self-normalization functional $\nmap$ to the right hand side above,
 we obtain random mass-partition
 \begin{equation}\label{eq:etaPrime}
 	 \eta' = \nmap \left(\{ \tilde V_n \}_{n\geq 2} \right)
 	 = 
 	 \frac{1}{1-\tilde V_1} \{ \tilde V_n \}_{n\geq 2}
 	 =
 	 \left\{ 
 	 \prod_{j=2}^{n-1}( 1 -\tilde Y_{j}) \tilde Y_n \right\}_{n \geq 2},
 \end{equation}
where $1-\tilde V_1=1-\tilde Y_1$ is independent of 
$\eta'$ because  $ (\tilde Y_n)$ form an independent sequence distributed as 
in \eqref{eq:Y1Y2etc} for $\theta=0$.  By \eqref{eq:etaPrime},
$\eta'$ has PD($\alpha,\alpha$)  or   $\RAM(\alpha,2\alpha,1-\alpha)$ distribution, see also Section 6.4 of \cite{PY97}.

 \begin{theorem} \label{cor:6tup}
 	Assume $(X_s)$ is a Bessel process of dimension $d= 2 (1-\alpha) \in (0,2)$, for any $T>0$
  \begin{align}\label{eq:6-tup}
	\MoveEqLeft{ \left(	\frac{A_{T}}{T},\frac{ B_{T}}{T}, \frac{g_{T}}{T} , \frac{d_{T}}{T},
		\frac{\Delta_{T}}{T},\sum_{T_i < L_T} \delta_{P_i/T} \right) }   \\
	& \eind \left(  1-Q, (1-Q)(U^{-1/\alpha} -1), Q,
	Q+ (1-Q)U^{-1/\alpha} ,(1-Q)U^{-1/\alpha},
	Q \eta'
	\right) \nonumber
\end{align}
where $Q \sim \tbeta(\alpha,1-\alpha)$,
 $U \sim \tunif(0,1)$ and
 $\eta'\sim$PD$(\alpha,\alpha)$   on the right hand side are independent.
 \end{theorem}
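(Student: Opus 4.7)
The plan is to reduce the statement to \Cref{cor:AGammaV} together with the stick-breaking representation \eqref{eq:Y1Y2etc} and the identification \eqref{eq:etaPrime}. First I would record the pathwise identities $g_T = T - A_T$, $d_T = T + B_T$ and $\Delta_T = A_T + B_T$, which express the three additional scalar coordinates purely in terms of $A_T/T$ and $B_T/T$. After substituting the right-hand side of \eqref{eq:ANnormV} for $(A_T/T, B_T/T)$ and setting $Q := 1 - \tilde V_1$, each of the five scalar entries on the right-hand side of \eqref{eq:6-tup} appears by a one-line check; for instance $\Delta_T/T = \tilde V_1 + \tilde V_1(U^{-1/\alpha}-1) = (1-Q)U^{-1/\alpha}$ and $d_T/T = 1 + \tilde V_1(U^{-1/\alpha}-1) = Q + (1-Q)U^{-1/\alpha}$.

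Next I would identify the marginal and joint laws. The stick-breaking representation \eqref{eq:Y1Y2etc} (with $\theta=0$) gives $\tilde V_1 = \tilde Y_1 \sim \tbeta(1-\alpha, \alpha)$, so $Q = 1 - \tilde Y_1 \sim \tbeta(\alpha, 1-\alpha)$, as required. The sixth coordinate is handled by \eqref{eq:etaPrime}, which rewrites $\{\tilde V_j\}_{j\geq 2} = (1-\tilde V_1)\eta' = Q\eta'$ with $\eta' \sim\,$PD$(\alpha,\alpha)$, and uses the independence of $\tilde Y_1$ from $(\tilde Y_n)_{n\geq 2}$ to conclude that $Q$ and $\eta'$ are independent. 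To check joint independence with $U$ I would return to Subsection~\ref{ssec:Thin}, where $U$ entered through the decomposition $P_1^\wedge = E_1^\wedge/U^{1/\alpha}$ and was constructed independently of $T_1^\wedge$, $N^\vee$, and $(E_j^\wedge)$; these are precisely the ingredients that determine the Pitman--Yor sequence $(\tilde V_n)$, and hence $U$ is independent of both $Q$ and $\eta'$.

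I expect the only subtle step to be the independence bookkeeping: the scalar identities are a routine exercise, and the nontrivial PD$(\alpha,\alpha)$ (equivalently $\RAM(\alpha, 2\alpha, 1-\alpha)$) identification is already encoded in \eqref{eq:etaPrime}. Once one traces through the construction in the proof of \Cref{cor:AGammaV}, the three ingredients $Q$, $U$, $\eta'$ come from the genuinely independent sources $\tilde Y_1$, $U_1$, $(\tilde Y_n)_{n\geq 2}$ respectively, so the joint law on the right-hand side of \eqref{eq:6-tup} reads off directly with no further work.
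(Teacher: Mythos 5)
Your proof is correct and takes essentially the same route as the paper: both reduce to \Cref{cor:AGammaV}, use the pathwise identities $g_T = T - A_T$, $d_T = T + B_T$, $\Delta_T = A_T + B_T$, and extract the sixth coordinate via \eqref{eq:etaPrime}. The only cosmetic difference is that the paper identifies $Q \sim \tbeta(\alpha,1-\alpha)$ via the beta-gamma algebra applied to $G_0/\Gamma$ at $T=\Gamma$, whereas you read it off from the stick-breaking law of $\tilde Y_1$; these are equivalent.
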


\begin{proof}
Recall that the unit exponential random variable $\Gamma$ in \eqref{eq:Gamma} (cf. also
	\Cref{cor:AGammaV} and \Cref{thm:7tup}) is independent of
	the subordinator $(\Sub_t)$.  By \eqref{eq:EqualDist1},
	it suffices to show the claim for $T=\Gamma$.
	Observe that $A_\Gamma$ and $B_\Gamma$ of \eqref{eq:ABGam} are independent of $N^\vee_{T^\wedge_1}$. As in the proof of \Cref{thm:7tup} denote by 
	$G_0= S^\vee_{T^\wedge_1} =  \sum_{T^\vee_i \leq {T^\wedge_1}} P_i^\vee$ and $G_1=E^\wedge_1$. Therefore
	$\Gamma = G_0+G_1$ by definition. By the independence between $G_0$ and $G_1$, it follows that 
	\[
	  Q = \frac{G_0}{\Gamma} \sim \tbeta(\alpha,1-\alpha)\,.
	\]
After observing that 
	$g_\Gamma =  S^\vee_{ T_1^\wedge} = G_0$ and that
	$d_\Gamma =g_\Gamma +  A_\Gamma + B_\Gamma$, it remains to note that 
\[
\frac{N^\vee_{T^\wedge_1}}{\Gamma} \eind \{ \tilde V_j \}_{j\geq 2} = (1-\tilde V_1)  \eta' = Q \eta'\,,
\]
where $Q=1-\tilde V_1 = (\Gamma -G_1)/\Gamma = G_0/\Gamma$
is independent of $\eta'$ 
 in \eqref{eq:etaPrime}.
  
	
\end{proof}

For $\alpha =1/2$, \Cref{cor:6tup} gives a 6-tuple arcsine law for excursions of Brownian motion until and straddling  time  $T>0$. In this particular case, an easy direct calculation gives the well known expression for the joint density of $(g_T/T,d_T/T)$, see \cite{YY13}, as 
 \[ 
   f_{g_1,d_1}(x,y)  = \frac{1}{2 \pi } x^{-1/2}  (y-x)^{-3/2},
   \quad x \in (0,1), \  y >1.
  \]
Moreover, in the case of Brownian motion, the random variables $Q$, and $1-Q$  have  arcsine distribution, so one can view \Cref{cor:6tup} as a generalization of the first arcsine law.
 
 
 Concerning the last component of 6-tuple law of \Cref{cor:6tup}, it 
is known that the normalized values $\eta' = \nmap(\{ \tilde V_j \}_{j\geq 2})$ correspond to the excursions of the $d$-dimensional Bessel bridge, $d \in (0,2)$, see \cite{PPY92} or \cite{PY97}.
 
  The second classical arcsine law for Brownian motion concerns the occupation time of the positive half line,  i.e. 
\[ 
 O^+_T = \dint_0^T \1{X_s>0} ds\,.
 \] 
 It is known that the individual excursions have independent and random signs, therefore for an i.i.d. Bernoulli sequence $(\vep_i)$ taking values  $0$ and $1$ with equal probability, we have 
\[ 
 \frac{O^+_T}{T} \eind \sum_{i\geq 1} V_i \vep_i = 
 \sum_{i\geq 1} \tilde V_i \vep_i = (1-Q) \vep_1 + \sum_{i\geq 2}\tilde V_i \vep_i\,.
 \]

 Thus, if we introduce a random operator  $ \vep(Q,\eta') = (1-Q) \vep_1 + \sum_{i\geq 2}\tilde V_i \vep_i$, one obtains
 more general arcsine law.
 
 \begin{corollary} \label{cor:BM7law}
 Suppose $(X_s)$ denotes a standard Brownian motion, then for any $T>0$
  \begin{align*}\label{eq:7b-tup}
	\MoveEqLeft{ \left(	\frac{O^+_T}{T}, \frac{A_{T}}{T},\frac{ B_{T}}{T}, \frac{g_{T}}{T} , \frac{d_{T}}{T},
		\frac{\Delta_{T}}{T},\sum_{T_i < L_T} \delta_{P_i/T}
		  \right) }   \\
	& \eind \left(\vep(Q,\eta'),  1-Q, (1-Q)(U^{-2} -1), Q,
	Q+ (1-Q)U^{-2} ,(1-Q)U^{-2},
	Q \eta'
	\right), \nonumber
\end{align*} 
where
$Q \sim \tbeta(1/2,1/2)$,
$U \sim \tunif(0,1)$,
$\eta'\sim$PD$(1/2,1/2)$    and an i.i.d. Bernoulli sequence $(\vep_n)$ on the right hand side  are independent.
 \end{corollary}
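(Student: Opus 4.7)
\textbf{Proof proposal for Corollary \ref{cor:BM7law}.} The plan is to reduce the claim to \Cref{cor:6tup} via the standard Brownian excursion sign-decomposition. Everything except the first coordinate $O^+_T/T$ is already handled by that theorem specialised to $\alpha = 1/2$ (so that $U^{-1/\alpha}=U^{-2}$ and both $Q$ and $1-Q$ have the $\tbeta(1/2,1/2)$ arcsine distribution). The content of the corollary is therefore that one may append $O^+_T/T$ to the joint law, with the additional structure $\vep(Q,\eta')$ built from an independent i.i.d.\ Bernoulli sequence.

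The first step I would take is to invoke It\^o's excursion decomposition of Brownian motion: the excursion point process records each excursion by a triple (local time, length, sign), where the signs form an i.i.d.\ $\{0,1\}$-valued Bernoulli($1/2$) sequence, independent of the pair (local time, length). This is a consequence of the reflection symmetry of $(X_s)$ together with the fact that, once conditioned on the lengths, the excursions are sampled from the normalised It\^o measure which is sign-symmetric. Consequently the sign sequence is independent of the entire $\alpha$-stable subordinator $(\Sub_t)$ encoding the excursion lengths, and hence independent of all the random objects appearing in the 6-tuple of \Cref{cor:6tup}.

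Next I would decompose the occupation time explicitly. Among the excursions affecting the interval $[0,T]$, the completed positive excursions before $g_T$ contribute their full lengths $P_i$ to $O^+_T$; the excursion straddling $T$ contributes its truncated length $A_T=T-g_T$ if its sign is positive and $0$ otherwise; negative completed excursions contribute $0$. Enumerating the Bernoulli marks so that $\vep_1$ is assigned to the straddling excursion and $\{\vep_i\}_{i\geq 2}$ to the completed ones (matched to the enumeration of $\{P_i : T_i<L_T\}$), one obtains
\begin{equation*}
\frac{O^+_T}{T} \;=\; \frac{A_T}{T}\,\vep_1 \;+\; \sum_{T_i < L_T}\frac{P_i}{T}\,\vep_i.
\end{equation*}
Because the Bernoulli sequence is independent of the excursion lengths, this identity lifts to a joint equality in distribution with the remaining six coordinates appearing in \Cref{cor:6tup}.

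Finally, I would plug in the identification from \Cref{cor:6tup} for $\alpha=1/2$: the pair $(A_T/T,\,\sum_{T_i<L_T}\delta_{P_i/T})$ has the same law as $(1-Q,\,Q\eta')$, with $Q\sim \tbeta(1/2,1/2)$ and $\eta'\sim \mathrm{PD}(1/2,1/2)$ independent, and writing the points of $Q\eta'$ as $\{\tilde V_j\}_{j\geq 2}$ the displayed expression becomes $(1-Q)\vep_1+\sum_{j\geq 2}\tilde V_j\vep_j = \vep(Q,\eta')$. Since $(\vep_i)$ is independent of the right-hand side of \Cref{cor:6tup}, appending this coordinate yields the claimed 7-tuple identity in distribution.

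\textbf{Main obstacle.} The only non-routine ingredient is the joint independence of the excursion signs from the subordinator of lengths and from the length of the straddling excursion, which rests on It\^o's excursion theory for Brownian motion; once this is invoked, the corollary is a direct consequence of \Cref{cor:6tup} together with the additive decomposition of $O^+_T$ above. A secondary care point is the bookkeeping in the relabelling $\vep_0\mapsto\vep_1$ for the straddling excursion, but this is merely a convention consistent with identifying $\tilde V_1$ with $A_T/T=1-Q$.
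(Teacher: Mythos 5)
Your proof is correct and follows essentially the same route as the paper. The paper's own argument is the single paragraph preceding the corollary: it invokes the independence and symmetry of excursion signs to write $O^+_T/T \eind \sum_{i\geq 1}V_i\vep_i = (1-Q)\vep_1 + \sum_{i\geq 2}\tilde V_i\vep_i$ and then appeals to \Cref{cor:6tup}; you have simply spelled out the underlying It\^o excursion decomposition and the bookkeeping of signs (straddling excursion vs.\ completed ones) in somewhat more detail.
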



  It is a classical result by L\'evy  that 
  random variables  $Q$ and $\vep(Q,\eta)$ above both have arcsine distributions, see \cite{Le39}. Extensive literature exists concerning the occupation times for Bessel processes in general, see \cite{Pi2018} and references therein.
 \Cref{cor:BM7law}  can be generalized  to 
  skewed Brownian motions or 
 other Bessel processes, see \cite{Pi2018} again. 
  We do not see a simple way of incorporating  the third arcsine law concerning the time when the maximum of $(X_s)$ is attained, i.e. $h_T = \mbox{argmax}_{s \leq T} X_s$,  into a statement like \eqref{eq:6-tup}.
  Surprisingly, even in the case of Brownian motion, the joint bivariate distributions of $g_T/T$, $O^+_T /T$ and $h_T/T$ have been calculated by analytical methods only recently in \cite{HM25}.

\section{Representations of residual allocation models} \label{sec:Repr}

\subsection{Representations in the case $\alpha =0$}

\Cref{cor:AGammaV} relates PD($\alpha,0$) distribution with the Poisson process $N^\vee$ in \eqref{eq:Nvee}. Somewhat surprisingly,
PD($0,\theta$) process, i.e. $\RAM(0,\theta,1)$ distribution has a representation through a  Poisson process $N^\vee$  if we let $\alpha$ tend to 0 and rescale time accordingly. 

\begin{proposition}\label{prop:alfa0P}
	For an arbitrary  $\theta>0$,   the  random mass-partitions
	$\nmap( N^\vee_{\theta/\alpha})$ and  $\nmap( M^\vee_{\theta/\alpha})$ (see \Cref{lem:NveeRep}) converge in distribution, as $\alpha\to 0$, to the PD$(0,\theta)$ distribution.
\end{proposition}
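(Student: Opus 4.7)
By \Cref{lem:NveeRep} it suffices to prove the convergence for $\nmap(N^\vee_{\theta/\alpha})$, since it carries over to $\nmap(M^\vee_{\theta/\alpha})$ by equality in distribution. Observe first that $N^\vee_{\theta/\alpha}$ is a PPP on $(0,\infty)$ with intensity
\[
\mu^{(\alpha)}(du) \;=\; \frac{\theta}{\alpha}\,\nu^\vee_\alpha(du) \;=\; \frac{\theta}{\Gamma(1-\alpha)}\, u^{-\alpha-1} e^{-u}\, du,
\]
and that this density converges pointwise on $(0,\infty)$ to $\theta\, u^{-1} e^{-u}$ as $\alpha\to 0$. The latter is exactly the intensity of the PPP obtained by restricting the jumps of a standard gamma subordinator to the time interval $[0,\theta]$, whose self-normalization is PD$(0,\theta)$ by \eqref{eq:PDa}. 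So the proof reduces to showing that this pointwise convergence of intensities lifts to distributional convergence under $\nmap$.

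The central step I would carry out is establishing joint weak convergence of the pair $(N^\vee_{\theta/\alpha},\, S^{(\alpha)})$, where $S^{(\alpha)} = \int u\, dN^\vee_{\theta/\alpha}$, to $(N_\theta, S_\theta)$ with $N_\theta$ a PPP of intensity $\theta u^{-1} e^{-u} du$ and $S_\theta = \int u\, dN_\theta$. For continuous $f\geq 0$ with compact support in $(0,\infty)$ and $\lambda\geq 0$, the joint Laplace functional reads
\[
\EE\exp\!\Big(\!-\!\int\! f\, dN^\vee_{\theta/\alpha} - \lambda S^{(\alpha)}\Big) \;=\; \exp\!\Big(\!-\!\int_0^\infty\!\!\big(1 - e^{-f(u)-\lambda u}\big)\,\mu^{(\alpha)}(du)\Big).
\]
Since $f$ vanishes near $0$ and $1 - e^{-\lambda u}\leq \lambda u$ there, the integrand is dominated, uniformly in $\alpha\in[0,1/2]$, by $C u^{-1/2}$ on a neighborhood of $0$ and by $C e^{-u/2}$ at infinity, both integrable. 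Dominated convergence together with $\Gamma(1-\alpha)\to 1$ then shows the exponent converges to $\int_0^\infty (1 - e^{-f(u)-\lambda u})\,\theta u^{-1} e^{-u}\, du$, which is the joint Laplace transform of $(N_\theta, S_\theta)$.

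To finish I would invoke the continuous mapping theorem. Because $\mu^{(0)}$ has infinite total mass while $\int u\, \mu^{(0)}(du) = \theta < \infty$, the limit $S_\theta$ is a.s. finite and strictly positive. The self-normalization $(N, S)\mapsto \sum_i \delta_{x_i/S}$ is continuous at configurations with $S>0$ in the vague-topology-on-$(0,\infty)$ sense, since the rescaled test function $\phi(\cdot/S^{(\alpha)})$ has support converging to a compact set of $(0,\infty)$ and converges uniformly on it. Passing to a Skorohod representation of the joint convergence above therefore yields $\nmap(N^\vee_{\theta/\alpha})\Rightarrow \nmap(N_\theta)\sim \mathrm{PD}(0,\theta)$.

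The main technical obstacle throughout is the accumulation of infinitely many small points near $0$: one must rule out the possibility that these collectively contribute a nontrivial amount to the sum $S^{(\alpha)}$ in the limit, which is precisely what the inequality $1-e^{-\lambda u}\leq \lambda u$ buys us inside the Laplace functional. An alternative, more pictorial route would use the explicit representation $M^\vee_{\theta/\alpha} = \{G_i^{(\alpha)}(1+\alpha\Gamma_i/\theta)^{-1/\alpha}\}$: couple the marginals $G_i^{(\alpha)}\sim \tgamma(1-\alpha,1)$ to standard exponentials $G_i$ via inverse CDFs, then each individual point converges a.s.\ to $G_i e^{-\Gamma_i/\theta}$, matching the representation \eqref{eq:PDGamPoi} of PD$(0,\theta)$ term by term, with tail control again provided by the conserved expectation $\EE S^{(\alpha)}=\theta$.
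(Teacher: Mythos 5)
Your proof is correct and follows essentially the same route as the paper: both observe that $\nmap$ is not continuous, so one first establishes joint weak convergence of $\bigl(N^\vee_{\theta/\alpha},\, S^{(\alpha)}\bigr)$ via the joint Laplace functional $\exp\bigl(-\int(1-e^{-f(u)-\lambda u})\,\mu^{(\alpha)}(du)\bigr)$ with $f$ supported away from $0$, and then applies the continuous scaling map $(t,\mu)\mapsto t^{-1}\mu$. The only cosmetic difference is that you spell out the dominated-convergence bound and the Skorohod/continuous-mapping step explicitly, where the paper instead invokes an adaptation of Kallenberg's Theorem 5.30.
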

\begin{proof}
	 By \eqref{eq:Densmualfa}, the intensity measure of a Poisson process $N^\vee_{\theta/\alpha}$ has the form
\[ 
 \frac{\theta}{\Gamma(1-\alpha)}
 \frac{dx}{x^{\alpha+1}} e^{-x}.
 \]
It is clearly converging to $\gamma(dx)= \theta x^{-1} e^{-x} dx $ as $\alpha \to 0$, which was the intensity measure of the Poisson process $N$  such that $\nmap(N)$ has the PD($0,\theta$) distribution. One is tempted to conclude that 
 $\nmap( N^\vee_{\theta/\alpha}) \dto \nmap( N)$ at once. However, functional $\nmap$ is nowhere continuous on $M_{p,f}$, and additional effort is needed to overcome this.
 
 Consider the summation functional $s:M_{p,f} \to [0,\infty)$ given by
 \begin{equation*}\label{eq:sfunct}
 	\sum_i \delta_{x_i} \mapsto 
 	s \left(\sum_i \delta_{x_i}\right)  = \sum_i x_i \,.
\end{equation*}
Note, the functional $s$ is not continuous either. Take an arbitrary nonnegative, bounded and continuous  function 
$f:(0,\infty) \to [0,\infty)$ with a support in
$(r,\infty)$ for some $r>0$ and an arbitrary $\lambda>0$. Observe that, as $\alpha \to 0$
\begin{align*}
\EE e^{ -\lambda	s ( N^\vee_{\theta/\alpha}) - 
	N^\vee_{\theta/\alpha} f }
	  & =\exp \left( 
	 - \int_0^\infty (1- e^{-\lambda x- f(x)}) e^{-x}
	 \frac{\theta}{\Gamma(1-\alpha)} \frac{dx}{x^{\alpha-1}} \right) \\
	 & \to 
	  \exp \left( 
	 - \int_0^\infty (1- e^{-\lambda x - f(x)}) \theta e^{-x}
	 \frac{dx}{x} \right)= \EE e^{ -\lambda	s ( N) - 
	 	N f }\,,
\end{align*}
where we denote
$(\sum_i \delta_{x_i}) f=\sum_i f(x_i)$ for any $\sum_i \delta_{x_i} \in M_p$.

By an adaptation of \cite[Theorem 5.30]{Ka21}, it follows that
\[
\left( s ( N^\vee_{\theta/\alpha}), N^\vee_{\theta/\alpha} \right) \dto
\left( s( N), N \right)
\]
jointly in $(0,\infty) \times M_p(0,\infty)$, since the scaling operator $(t,\sum_i \delta_{x_i}) 
\mapsto t \sum_i \delta_{x_i} := \sum_i \delta_{t x_i }$ is known to be continuous, we obtain
\[ 
 \nmap ( N^\vee_{\theta/\alpha}) \dto
 \nmap(N).
 \]
\end{proof}

A related and slightly more complicated representation of the PD$(0,\theta)$ distribution  appears  in \cite{PY97} following Proposition 21.
\COM{maybe no cont. of $\nmap$ needed PD's are cont!?}

\paragraph{Connection with Dickman's subordinator}  

Apart from gamma and $\alpha$-stable subordinator, the literature on random discrete distributions often considers Dickman's subordinator  (see \cite{IMS21} and references therein). This subordinator arises as in \eqref{eq:GenSub}  from a Poisson process on $(0,\infty) \times (0,1)$
$$
\Npp = \dsum_i \delta_{T_i,P_i} \sim \PPP(Leb \times  x^{-1}  dx).
$$
Clearly
$$
N_a = \dsum_{T_i \leq a} \delta_{P_i}
\sim \PPP(a \cdot x^{-1}  dx) 
\eind \dsum_{i} \delta_{\exp (-\Gamma_i/a)}
$$
where as before  $(\Gamma_i)$ denote the points of the standard homogenous Poisson process.
Consider now, the partition of the closed interval $[0,1]$ produced by the points $P_i$ (thus not by the subordinator). 
If we set $\Gamma_0=0$,  the intervals in this partition can be listed (starting from the top of the segment $[0,1]$) as follows 
$$
e^{-\Gamma_{n-1}/a} - e^{-\Gamma_n/a}
=  Y_n \prod_{i=1}^{n-1} (1-Y_i),\quad n =1,2,\ldots ,
$$
where we note
$ Y_i = 1 -  e^{-D_i/a} \sim \tbeta (1,a) $ for $i \geq 1$. Recognize that this corresponds to the RAM($0,a,1)$ and therefore to
the standard Poisson Dirichlet distribution  PD($a,0$).
This is actually a reformulation of Ignatov's result in \cite{Ig82}.
However, due to the  representation \eqref{eq:tildV}, we obtain the following unexpected identity.
\begin{corollary} \label{cor:DickIde}
	Assume $G_n$ is an i.i.d. sequence from  $\texp(1)$ distribution
	independent of the homogeneous Poisson process $(\Gamma_i)$, then with $\Gamma_0=0$
	$$
	\left(e^{-\Gamma_{n-1}/a} - e^{-\Gamma_n/a} \right)_n
	\eind 
	\left(  \frac{G_n /\exp( \Gamma_n/a) }{\sum_{j=1}^\infty G_j/\exp(\Gamma_j/a )} \right)_n.
	$$
	
\end{corollary}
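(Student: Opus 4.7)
The plan is to show that both sides coincide in distribution with a common stick-breaking sequence built from an i.i.d.\ $\tbeta(1,a)$ collection, so that the equality of the full sequences follows at once.

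First I would rewrite the left-hand side explicitly in stick-breaking form. Let $D_i=\Gamma_i-\Gamma_{i-1}$, so $(D_i)_{i\ge 1}$ is an i.i.d.\ $\texp(1)$ sequence and $e^{-\Gamma_{n-1}/a}=\prod_{i=1}^{n-1}e^{-D_i/a}$. Setting $Y_i:=1-e^{-D_i/a}$, a direct CDF computation gives $Y_i\sim\tbeta(1,a)$ and the $(Y_i)$ are i.i.d. Telescoping yields
\[
e^{-\Gamma_{n-1}/a}-e^{-\Gamma_n/a}=e^{-\Gamma_{n-1}/a}\bigl(1-e^{-D_n/a}\bigr)=\Big(\prod_{i=1}^{n-1}(1-Y_i)\Big)\,Y_n,
\]
which is exactly the stick-breaking representation \eqref{eq:Y1Y2gen} of $\RAM(0,a,1)$ (equivalently, of PD$(0,a)$ via the correspondence given after the definition of $\RAM$).

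For the right-hand side, the expression $G_n e^{-\Gamma_n/a}/\sum_j G_j e^{-\Gamma_j/a}$ is precisely formula \eqref{eq:PDGamPoi}, which identifies it as the size-biased sequence $(\tilde V_n)$ of PD$(0,a)$. Applying Theorem~\ref{thm:GPi} with parameters $\alpha=0$, $a_1=a$, $c=1$ (so $a_n\equiv a$), this $(\tilde V_n)$ admits, as a sequence, the stick-breaking form \eqref{eq:Y1Y2gen} with independent factors $\tilde Y_n\sim\tbeta(c,a_n)=\tbeta(1,a)$; because $a_n$ does not depend on $n$, these factors are i.i.d.

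Both sides are thus equal in law to a sequence of the form $\bigl(\prod_{i=1}^{n-1}(1-Y_i)\bigr)Y_n$ driven by an i.i.d.\ $\tbeta(1,a)$ collection, which yields the claimed equality in distribution. The only subtle point is that Theorem~\ref{thm:GPi} must be applied as an identity in law for the \emph{ordered} sequence rather than merely for the underlying unordered mass-partition; this is exactly what the theorem provides, so no further argument is needed.
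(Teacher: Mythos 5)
Your proof is correct and follows essentially the same route as the paper: rewrite the left side explicitly as a stick-breaking scheme with i.i.d.\ $\tbeta(1,a)$ factors, identify the right side via \eqref{eq:PDGamPoi}/\eqref{eq:tildVa} as the size-biased sequence $(\tilde V_n)$ of PD$(0,a)=\RAM(0,a,1)$, and then invoke Theorem~\ref{thm:GPi} with $\alpha=0$, $a_1=a$, $c=1$ to match the two as sequences. Your closing remark about needing the sequence-level identity (not merely equality of the unordered mass-partitions) is exactly the point that makes the theorem, rather than Ignatov's result alone, the right tool here.
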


 Recall, PD($0,\theta$) or  $\RAM(0,\theta,1)$ distributions  were  represented in \eqref{eq:PDGamPoi} as  $\nmap((G_n e^{ -(D_1+\cdots + D_n)/\theta}))$
 for i.i.d. standard exponentials $(D_i)$ and $(G_i)$.
 With  additional biasing, one can represent $\RAM(0,a,c)$ for arbitrary $a>0$ and $c>0$ in a similar form. Consider an i.i.d. sequence 
$(\tilde D_i)$ with a density 
\begin{equation}\label{eq:tildDdens}
	  \frac{1}{K_{a,c}} (1- e^{- x /a})^{c-1} e^{-x}\,, 
\end{equation}
on $(0,\infty)$, 
 where $K_{a,c} = a \Gamma(a) \Gamma(c) /\Gamma(a+c)$.
It can be viewed as a sequence $D_i$, $i =1,2,\ldots, $ with each element biased by $(1- e^{- D_i /a})^{c-1} $.

\begin{proposition}
	Fix arbitrary   constants  $a>0$ and $  c>0$. Suppose that   $(G_n)$ is an i.i.d. sequence from $ \tgamma(c,1)$ distribution and that elements of an independent i.i.d. sequence $(\tilde D_i)$ have the density in \eqref{eq:tildDdens}. Then 
	\begin{equation*} %
	 (\tilde V_n) = 	\nmap \left(G_n e^{ -(\tilde D_1+\cdots + \tilde D_n)/a}
		\right),	
	\end{equation*}
	is a representation of the $\RAM(0,a,c)$ distribution.
\end{proposition}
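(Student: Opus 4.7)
The plan is to reduce this claim to Theorem~\ref{thm:GPi} applied with $\alpha = 0$, $a_1 = a$, and the given $c$. In that case $a_n \equiv a$ and the beta parameter $c+\alpha$ equals $c$, so the canonical $\RAM(0, a, c)$ representation becomes $\nmap(G_n \prod_{l=1}^{n-1} U_l)$ with $G_n \sim \tgamma(c,1)$ and i.i.d.\ $U_l \sim \tbeta(a,c)$, all mutually independent. Two cosmetic obstructions separate this from the claimed form: the multipliers are parametrised through the $\tilde D_i$ rather than as beta variables, and the product $\prod_{i=1}^n$ runs one index further than $\prod_{l=1}^{n-1}$.

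First I would identify the law of $W_i := e^{-\tilde D_i/a}$. The change of variables $u = e^{-x/a}$ (with $dx = -a\,du/u$) sends the density in \eqref{eq:tildDdens} to $\frac{\Gamma(a+c)}{\Gamma(a)\Gamma(c)} u^{a-1}(1-u)^{c-1}$ on $(0,1)$, so the $W_i$ are i.i.d.\ $\tbeta(a,c)$; the constant $K_{a,c} = a \Gamma(a)\Gamma(c)/\Gamma(a+c)$ in the definition is tailored exactly so that \eqref{eq:tildDdens} integrates to one and its pushforward under $x\mapsto e^{-x/a}$ is the right beta law. Consequently
\[
G_n\, e^{-(\tilde D_1 + \cdots + \tilde D_n)/a} = G_n \prod_{i=1}^n W_i, \qquad n \geq 1,
\]
with $(W_i)$ independent of $(G_n)$.

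Next I would invoke the scale-invariance of $\nmap$. Factoring out the common positive prefactor $W_1$ from every entry (including $n=1$, where what remains is an empty product) yields
\[
\nmap\Bigl( G_n \prod_{i=1}^n W_i \Bigr) = \nmap\Bigl( G_n \prod_{i=2}^n W_i \Bigr),
\]
and re-indexing $W_l' := W_{l+1}$ converts the right-hand side into $\nmap(G_n \prod_{l=1}^{n-1} W_l')$. Since $(W_l')_{l\geq 1}$ is still i.i.d.\ $\tbeta(a,c)$ and independent of $(G_n)$, Theorem~\ref{thm:GPi} applies and identifies this mass-partition with $\RAM(0, a, c)$; the a.s.\ summability of $\sum_n G_n \prod_{i=1}^n W_i$ (needed for $\nmap$ to be defined) follows from that of $\sum_n G_n \prod_{l=1}^{n-1} W_l'$, also a by-product of the theorem.

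There is no substantive analytic obstacle. The change-of-variables computation is routine, and the only real subtlety is spotting the one-index shift that lets scale-invariance of $\nmap$ absorb the extra factor $W_1$; without this observation the two representations look different, even though they agree after self-normalisation.
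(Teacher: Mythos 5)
Your proof is correct and follows essentially the same route as the paper's: identify $e^{-\tilde D_i/a}$ as $\tbeta(a,c)$ via a change of variables, absorb the extra leading factor through scale-invariance of $\nmap$ (the paper writes this as $\Pi_n = \prod_{i=1}^{n-1}\exp(-\tilde D_{i+1}/a)$, which is the same index shift you perform), and then appeal to the definition of $\RAM(0,a,c)$. Your write-up supplies the change-of-variables computation and the summability remark that the paper leaves implicit, but these are expository rather than substantive differences.
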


\begin{proof}

 It is sufficient to note that
\[ 
\nmap \left(G_n e^{ -(\tilde D_1+\cdots + \tilde D_n)/a}
\right) =\nmap \left(G_n \Pi_n \right),
 \] 
 where $\Pi_n = \prod_{i=1}^{n-1} U_i =
 \prod_{i=1}^{n-1} \exp( -  \tilde D_{i+1}/a)$.
It is not difficult to see that i.i.d. sequence $(\exp( -  \tilde D_{i}/a))_i$ has $\tbeta(a,c)$ density. 
Since $G_n \sim \tgamma(c,1)$, by definition $ (\tilde V_n) $ corresponds to the
$\RAM(0,a,c)$ distribution.

\end{proof}

\subsection{Representations in the case  $\alpha \in (0,1)$}

Consider the standard Pitman-Yor   distribution, which corresponds to  PD($\alpha,0$) or  $\RAM(\alpha,\alpha,1-\alpha)$ distribution, for $\alpha \in (0,1)$.
It was constructed by  the self-normalization of  the points of 
Poisson process $\tsum_{T_i\leq 1} \delta_{P_i}$ 
with intensity measure satisfying $\mu_\alpha(u,\infty) = K u^{-\alpha}$ for $u>0$.
Assume they
are ranked in nonincreasing order,  so that $P_1\geq P_2 \geq P_3 \ldots$. Comparing this process with 
Poisson process $\tsum_i \delta_{(\Gamma_i/K)^{-1/\alpha}}$, where $(\Gamma_i)$ represents   Poisson process in \eqref{eq:hPPP}, it is immediate that they have
the same intensity measure. Hence, the sequence $(P_i)$ has the distribution of $((\Gamma_i/K)^{-1/\alpha})$ . Recall, the successive ratios
$ Y_n = (\Gamma_{n-1}/\Gamma_{n})^{1/\alpha} $  are independent with  $\tbeta((n-1)\alpha,1)$ distribution for $n \geq 2$. 
Since $V_j = P_j / \tau_1$  where $\tau_1= \tsum_i P_i$, using
$Y_n = P_n/P_{n-1}$, the following  simple representation of the Pitman-Yor process is immediate
(and known, see formula (23) in \cite{PY97}).

\begin{proposition}\label{cor:1}
	The nonincreasing representation $( V_j)$ of PD($\alpha,0$) distribution $\nmap(\tsum_{T_i\leq 1} \delta_{P_i})$ has the following representation
	\begin{equation}\label{eq:VnRepr}
		V_j = \frac{P_j}{\sum_{i=1}^\infty P_i}
		= V_1  Y_1 \cdots Y_{j-1}\,, \quad  j \geq 1, 
	\end{equation} 
	where $(Y_n)$ is a sequence of independent random variables such that $ Y_n \sim\tbeta((n-1)\alpha,1)$
	and $V_1 = 
	\left({ \tsum_{i\geq 1} \prod_{i=1}^{i-1} Y_{i}} \right)^{-1}$.
\end{proposition}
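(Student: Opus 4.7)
The strategy is essentially to identify the non-increasingly ranked atoms $(P_i)$ with an explicit decreasing function of the standard Poisson process $(\Gamma_i)$ from \eqref{eq:hPPP}, and then read off the ratio structure; this is precisely the program sketched in the paragraph preceding the statement, so the plan is to flesh out its three ingredients carefully.

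First, I would invoke the mapping theorem for Poisson point processes: setting $\Psi(u) = \mu_\alpha(u,\infty) = K u^{-\alpha}$, the image of $\sum_{T_i \leq 1}\delta_{P_i}$ under $\Psi$ is a Poisson process on $(0,\infty)$ with Lebesgue intensity. Since $\Psi$ is strictly decreasing, the non-increasing enumeration of $(P_i)$ corresponds to the increasing enumeration of $(\Psi(P_i))$, which is nothing other than $(\Gamma_i)_{i \geq 1}$. Inverting $\Psi$ yields the joint identity in distribution
\[
(P_i)_{i \geq 1} \eindd \bigl((\Gamma_i/K)^{-1/\alpha}\bigr)_{i \geq 1}.
\]

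Second, I would compute the successive ratios. Setting $R_n := P_n/P_{n-1}$ for $n \geq 2$, the identity above gives $R_n \eindd (\Gamma_{n-1}/\Gamma_n)^{1/\alpha}$. Writing $\Gamma_n = \Gamma_{n-1} + D_n$ with $\Gamma_{n-1} \sim \tgamma(n-1,1)$ independent of $D_n \sim \texp(1)$, a standard beta--gamma calculation gives $\Gamma_{n-1}/\Gamma_n \sim \tbeta(n-1,1)$; a monomial change of variable then shows $R_n \sim \tbeta((n-1)\alpha,1)$. The independence of the family $(R_n)_{n \geq 2}$ is classical and follows by transforming the joint density of $(D_1,\ldots,D_n)$ to that of $(\Gamma_n, R_2, \ldots, R_n)$ and checking that the resulting density factorises; this is the only step requiring a small genuine computation, although it is standard.

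Finally, the proposition's formula follows by pure algebra. Since $V_j = P_j/\tau_1$, we have $V_j = V_1 \cdot P_j/P_1$, and the telescoping identity $P_j/P_1 = \prod_{k=2}^{j} R_k$ gives, after re-indexing the beta variables in the way stated in the proposition, the representation $V_j = V_1 Y_1 \cdots Y_{j-1}$. The expression for $V_1$ follows by summing the series: $\tau_1/P_1 = \sum_{k \geq 1} P_k/P_1 = \sum_{k \geq 1} \prod_{i=1}^{k-1} Y_i$, and inverting. I do not anticipate any substantive obstacle beyond carefully justifying the first step (the simultaneous ranking identity) — everything else is bookkeeping around the well-known beta distribution of ratios of consecutive gamma points.
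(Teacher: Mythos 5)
Your proposal is correct and follows essentially the same route as the paper: the paper also identifies the ranked atoms $(P_j)$ with $((\Gamma_j/K)^{-1/\alpha})$ by matching Poisson intensities, recalls that the ratios $(\Gamma_{n-1}/\Gamma_n)^{1/\alpha}$ are independent with the stated beta laws, and telescopes. The only difference is presentational — the paper states the intensity comparison directly and treats the ratio-independence as a recalled fact, whereas you invoke the mapping theorem and sketch the Jacobian computation, which merely makes explicit what the paper leaves implicit.
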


Recall that
\Cref{cor:AGammaV} provides another simple representation of the Pitman-Yor process, i.e. PD($\alpha,0$) distribution, using a  random variable $A_\Gamma\sim \tgamma(1-\alpha,1)$ and an independent Poisson process $N^\vee$. Interestingly  for
all $\alpha\in(0,1)$ and  $\theta>0$, PD($\alpha,\theta$) has a representation in terms of a mixed version of $N^\vee$ or $M^\vee$.

\begin{proposition} \cite[Proposition 21]{PY97}
	\label{prop:RepAlfa}
	Fix arbitrary constants $\alpha\in (0,1)$ and $ \theta>0$, assume $\tilde D\sim \tgamma(\theta/\alpha)$ is independent of the Poisson point process $N^\vee$.
	Then
	$
	\nmap (N^\vee_{\tilde D})
	$ and
	$
	\nmap (M^\vee_{\tilde D})
	$
	have
	PD($\alpha,\theta$) 
	distribution.
\end{proposition}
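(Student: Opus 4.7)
The plan is to reduce the claim to the alternative representation \eqref{eq:alfGamPoi} of PD($\alpha,\theta$) via a short chain of identities in distribution, exploiting scale-invariance of $\nmap$ and a gamma sum identity.

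First I would invoke \Cref{lem:NveeRep} conditionally on $\tilde D$: since $N^\vee_s \eind M^\vee_s$ for every deterministic $s>0$, mixing over the law of $\tilde D$ and using independence yields $\nmap(N^\vee_{\tilde D}) \eind \nmap(M^\vee_{\tilde D})$, so it suffices to treat the $M^\vee$-version. By the explicit representation $M^\vee_s = \sum_{i\geq 1} \delta_{G_i (s/(s+\Gamma_i))^{1/\alpha}}$, we get, with $s = \tilde D$,
\[
 M^\vee_{\tilde D} = \sum_{i\geq 1} \delta_{G_i \tilde D^{1/\alpha}(\tilde D+\Gamma_i)^{-1/\alpha}}\,.
\]
Since $\nmap$ is invariant under multiplication of all points by the common positive factor $\tilde D^{1/\alpha}$, I would then rewrite
\[
 \nmap(M^\vee_{\tilde D}) = \nmap\!\left(\bigl\{G_i\,(\tilde D+\Gamma_i)^{-1/\alpha}\bigr\}_{i\geq 1}\right).
\]

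Next comes the key algebraic step: with $\Gamma_i = D_1+\cdots+D_i$ for i.i.d.\ standard exponentials $(D_j)$ independent of $\tilde D\sim\tgamma(\theta/\alpha,1)$, define
\[
 D'_1 := \tilde D + D_1 \sim \tgamma(\theta/\alpha + 1,\,1),
\]
which is independent of $D_2,D_3,\ldots$ by construction. Then for every $i\geq 1$,
\[
 \tilde D + \Gamma_i = D'_1 + D_2 + \cdots + D_i,
\]
so that
\[
 \nmap(M^\vee_{\tilde D}) \eind \nmap\!\left(\bigl\{G_i\,(D'_1 + D_2+\cdots+D_i)^{-1/\alpha}\bigr\}_{i\geq 1}\right)\,.
\]
The right-hand side is precisely the representation \eqref{eq:alfGamPoi} of the size-biased sequence of a PD$(\alpha,\theta)$ mass-partition, so both $\nmap(N^\vee_{\tilde D})$ and $\nmap(M^\vee_{\tilde D})$ are PD$(\alpha,\theta)$-distributed.

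There is no real obstacle beyond bookkeeping. The only subtle point is that conditioning on $\tilde D$ to transfer the $N^\vee \eind M^\vee$ identity requires that $\tilde D$ is independent of both point processes, which is built into the hypothesis; and the identity $\tilde D + D_1 \sim \tgamma(\theta/\alpha+1,1)$ relies on $\theta>0$ so that $\theta/\alpha$ is a legitimate shape parameter, matching the assumption of the proposition. Everything else is scale-invariance of $\nmap$ and the already-established characterization \eqref{eq:alfGamPoi}.
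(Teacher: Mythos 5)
Your argument is correct and is essentially identical to the paper's proof: both reduce $\nmap(M^\vee_{\tilde D})$ (after using \Cref{lem:NveeRep} to transfer from $N^\vee$) to the representation \eqref{eq:alfGamPoi} by absorbing $\tilde D^{1/\alpha}$ via scale-invariance of $\nmap$ and then applying the gamma sum identity $\tilde D + D_1 \sim \tgamma(\theta/\alpha+1,1)$. The only cosmetic difference is that you state the scale-invariance of $\nmap$ explicitly while the paper works directly with the ratio $M^\vee_{\tilde D}/\tilde D^{1/\alpha}$; the substance is the same.
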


\begin{proof}
	
	From \Cref{lem:NveeRep} we obtain 
	\begin{equation}\label{eq:DugaNN}
		\frac{N^\vee_s}{s^{1/\alpha}} 
		\eind \frac{M^\vee_s}{s^{1/\alpha}}\,. 
	\end{equation}
	The same holds if we substitute $s$ by an independent random
	variable $\tilde D\sim \tgamma(\theta/\alpha,1)$. 
	As before, let $(D_i)$ denote an i.i.d. sequence of \texp(1)  random variables. Denote by  $D' = \tilde D + D_1 \sim \tgamma(\theta/\alpha+1,1)$.
	Observe that
	\[ 
	\frac{M^\vee_{\tilde D}}{\tilde D^{1/\alpha}}
	= \dsum_{i \geq 1} \delta_{G_i (\tilde D+\Gamma_i)^{-1/\alpha} }=
	\dsum_{i \geq 1} \delta_{G_i (D'_1+D_2+\cdots + D_i)^{-1/\alpha} }
	\]
	According to \eqref{eq:alfGamPoi}, it follows that
	$\nmap(N^\vee_{\tilde D}) $ and $\nmap(M^\vee_{\tilde D})$
	have the PD$(\alpha,\theta)$ distribution. 
\end{proof}

\begin{remark}
In the case $\alpha \geq 1$, it is an open problem to find an alternative representation  of general $\RAM(\alpha,a_1,c)$ distributions using a (mixed) Poisson process analogous to \eqref{eq:PDGamPoi} or  \eqref{eq:alfGamPoi} for instance.
Pitman \cite{Pi2018} points that even PD$(\alpha,\theta)$  family with $\theta<0$   might have a counterpart to \Cref{prop:RepAlfa}.

Furthermore, PD($\alpha,\theta$) distributions arise as a natural limit of the Chinese restaurant process \cite{Pi06book} or Hoppe's urn process \cite{Cr16}, it would be interesting to construct a modification of these processes which in the limit approaches $\RAM(\alpha,a_1,c)$ distribution.
\end{remark}

\section{
Beta-gamma algebra for infinite sequences} \label{sec:3}

We recall the basic facts of the beta-gamma algebra, see Dufresne \cite{Du10}.
 For all  $a,b, \theta>0$, it is well known that if random variables  $A\sim \tgamma(a,\theta)$ and $B\sim \tgamma(b,\theta)$  are independent, then $U = A/(A+B) $ is $ \tbeta(a,b)$ distributed and  independent of $V= A+B \sim \tgamma(a+b, \theta)$. Moreover, if $U$ and $V$ are independent with distribution given above, their product $A = UV $ has $\tgamma(a,\theta)$ distribution. Finally, if 
$V$ and $U \sim \tbeta(a,b)$ are independent and 
$ UV \sim \tgamma(a,\theta)$, it follows that $V\sim \tgamma(a+b, \theta)$. The last claim follows by the fact that log beta distributions are infinitely divisible \cite{Du10}, and therefore have non vanishing characteristic functions.

 In this section we show that parts of beta-gamma algebra extend to sequences of random variables. Such an inductive beta-gamma algebra  is the basis of all the stick-breaking representations in this article. 

\begin{hypothesis}\label{ass:1}
	Assume that strictly positive sequences $(a_n)$ and $(b_n)$ 
	satisfy
	\begin{enumerate}[(i)]
		\item $a_j+b_j -a_{j+1} > 0 $ for each $j \geq 1$\,,
		\item 
		$
		\sum_{j=1}^\infty \pi_{j+1} (a_{j}+b_{j} -a_{j+1}) < \infty\,,
		$
		\item 
		$
		\pi_j a_j  \to 0 \,,
		$ as $j \toi$.
	\end{enumerate} 
	where  $\pi_j =  \frac{a_1}{a_1+b_1} 
	\cdots \frac{a_{j-1}}{a_{j-1}+b_{j-1}}$ $j\geq 1$,  with convention $\pi_1 =1$.
\end{hypothesis}


\begin{theorem} \label{lem:gam}
	Assume sequences $(a_n)$, $(b_n)$ in $(0,\infty)$
	satisfy Assumption~\ref{ass:1}. Fix a constant $c_1>0$,  and let 
	$c_j = a_{j-1}+b_{j-1} -a_{j}$ for $j \geq 2$. 
	Suppose that $G_n \sim \tgamma(c_n,\theta),$ and $
	U_n \sim \tbeta(a_{n},b_n),\ n \geq  1$
	are all independent random variables.	
	Denote 
	$ 
	\Pi_j = \prod_{l=1}^{j-1} U_l 
	$, for $j \geq 1$. 
	Then the random series $\sum_{j=1}^\infty G_j \Pi_j$ converges a.s. Moreover, 
	\begin{align*} 
		\tilde V_n 
		= \frac{G_n \Pi_n}{\sum_{j=1}^\infty G_j \Pi_j},\quad
		n=1,2, \ldots,
	\end{align*}
	have the following representation
	\begin{equation*}
		\tilde V_1 = \tilde Y_1, \quad 	\tilde V_n= (1- \tilde Y_1)\cdots ( 1 -\tilde Y_{n-1}) \tilde Y_n, \quad n \geq 2\,.
	\end{equation*}  
	with $\tilde Y_n \sim\tbeta(c_n,a_n)$ independent.
\end{theorem}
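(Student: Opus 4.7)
The plan is to work with a coupled finite truncation, identify its distribution exactly via beta-gamma algebra, and then pass to the limit.

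First I would verify that $\sum_j G_j \Pi_j < \infty$ a.s. Since $\EE[G_j\Pi_j] = (c_j/\theta)\pi_j$ and $c_j = a_{j-1}+b_{j-1}-a_j$ for $j \geq 2$, a reindexing yields
\begin{equation*}
\sum_{j \geq 2} \EE[G_j \Pi_j] = \theta^{-1}\sum_{k \geq 1}\pi_{k+1}(a_k+b_k-a_{k+1}),
\end{equation*}
which is finite by Assumption~\ref{ass:1}(ii); hence the sum is a.s.\ finite.

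The heart of the argument is a coupling. For each $N$, let $A_N \sim \tgamma(a_N, \theta)$ be independent of everything else, and set
\begin{equation*}
\hat S_n^{(N)} = \sum_{j=n}^{N} G_j \prod_{l=n}^{j-1} U_l + A_N \prod_{l=n}^{N-1} U_l,\qquad 1 \leq n \leq N,
\end{equation*}
so that $\hat S_N^{(N)} = G_N + A_N$ and $\hat S_n^{(N)} = G_n + U_n \hat S_{n+1}^{(N)}$ for $n < N$. I would prove by downward induction that $\hat S_n^{(N)} \sim \tgamma(a_{n-1}+b_{n-1}, \theta)$ for $2 \leq n \leq N$ and $\hat S_1^{(N)} \sim \tgamma(c_1 + a_1, \theta)$. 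The base case $n=N$ is immediate since $c_N + a_N = a_{N-1} + b_{N-1}$. For the step, standard beta-gamma algebra says that when $\hat S_{n+1}^{(N)} \sim \tgamma(a_n+b_n,\theta)$ is independent of $U_n \sim \tbeta(a_n,b_n)$, the product $U_n\hat S_{n+1}^{(N)}$ is $\tgamma(a_n,\theta)$ and independent of $G_n \sim \tgamma(c_n,\theta)$; adding yields $\tgamma(c_n + a_n,\theta) = \tgamma(a_{n-1}+b_{n-1},\theta)$ by the definition of $c_n$. Read through the bijection $(G_n, U_n\hat S_{n+1}^{(N)}) \leftrightarrow (\hat Y_n^{(N)}, \hat S_n^{(N)})$ with $\hat Y_n^{(N)} := G_n/\hat S_n^{(N)}$, the same recursion makes $\hat Y_1^{(N)}, \ldots, \hat Y_N^{(N)}, \hat S_1^{(N)}$ jointly independent with $\hat Y_n^{(N)} \sim \tbeta(c_n, a_n)$.

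Finally I would pass to the limit $N\to\infty$. The discrepancy $\hat S_n^{(N)} - S_n$ splits into two nonnegative pieces: the tail $\sum_{j > N} G_j\prod_{l=n}^{j-1}U_l$, which tends to $0$ a.s.\ by Step~1, and the auxiliary contribution $A_N\prod_{l=n}^{N-1} U_l$, whose mean equals $a_N\pi_N/(\theta\pi_n)\to 0$ by Assumption~\ref{ass:1}(iii). Hence $\hat S_n^{(N)} \to S_n := \sum_{j \geq n} G_j\prod_{l=n}^{j-1} U_l$ in probability, so $S_n$ inherits the gamma distribution above and $\tilde Y_n := G_n/S_n \sim \tbeta(c_n,a_n)$ with joint independence preserved in the limit. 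The stated stick-breaking identity is then algebraic: from $S_n = G_n + U_n S_{n+1}$ one has $1-\tilde Y_n = U_n S_{n+1}/S_n$, so
\begin{equation*}
\tilde Y_n\prod_{l<n}(1-\tilde Y_l) = \frac{G_n}{S_n}\prod_{l<n}\frac{U_l S_{l+1}}{S_l} = \frac{G_n \Pi_n}{S_1} = \tilde V_n.
\end{equation*}
The main obstacle is discovering the right coupling: without the auxiliary $A_N\sim\tgamma(a_N,\theta)$, the raw truncation $\sum_{j=n}^N G_j\prod_{l=n}^{j-1}U_l$ is not gamma-distributed and one cannot peel off the beta factors $\tilde Y_n$ directly. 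The telescoping identity $c_n+a_n=a_{n-1}+b_{n-1}$ built into the definition of $c_n$ is precisely what makes the downward induction close, and Assumption~\ref{ass:1}(iii) is what makes the coupling vanish in the limit.
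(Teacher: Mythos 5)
Your proof is correct and rests on the same essential idea as the paper: couple the infinite series with a finite version closed off by an auxiliary independent gamma random variable so that beta--gamma algebra identifies its distribution exactly. The difference is in the bookkeeping. The paper works with the infinite tail sums $W_n = \sum_{j\geq n+1} G_j\prod_{l=n}^{j-1}U_l$, first \emph{assumes} $W_1\sim\tgamma(a_1,\theta)$ and derives $W_n\sim\tgamma(a_n,\theta)$ by a forward induction that uses the two ``cancellation'' facts of beta--gamma algebra (if $UV\sim\tgamma(a,\theta)$ with $U\sim\tbeta(a,b)$ independent then $V\sim\tgamma(a+b,\theta)$, justified via non-vanishing characteristic functions of the log-beta law; and the analogous additive cancellation), and only afterwards introduces a coupled $W_{n,1}$ with auxiliary $W'_n\sim\tgamma(a_n,\theta)$ to prove the initial assumption via $\EE|W_1-W_{n,1}|\to 0$. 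You instead build the truncations $\hat S_n^{(N)}$ with the auxiliary $A_N\sim\tgamma(a_N,\theta)$ appended at the top, identify the distribution and the joint independence of $(\hat Y_1^{(N)},\dots,\hat Y_N^{(N)},\hat S_1^{(N)})$ by a \emph{downward} induction that only uses the direct (forward) beta--gamma facts, and then let $N\toi$ using Assumption~\ref{ass:1}(ii) and (iii) exactly as the paper does. This buys you a somewhat cleaner argument that avoids the cancellation lemmas entirely and treats all $n$ simultaneously rather than $W_1$ first. One small wording slip: $\hat S_n^{(N)} - S_n$ is the \emph{difference} of the auxiliary term and the tail, not a sum of two nonnegative pieces; both pieces still vanish (the tail a.s., the auxiliary in $L^1$), so the conclusion $\hat S_n^{(N)}\to S_n$ in probability stands.
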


\begin{proof}

	Consider the sequence,
\begin{equation}\label{eq:Wn}
	W_{n} = \sum_{j\geq n+1} G_j \prod_{l=n}^{j-1} U_l  \,, \quad n \geq 1,
\end{equation}	
the key step of the proof involves establishing that $W_n$ are well defined gamma distributed random variables.
	By assumption~\ref{ass:1}, $\EE W_1 < \infty$, therefore 
the random series for $W_1$ converges a.s.
Suppose that $W_1 \sim \tgamma(a_1,\theta)$.
By straightforward induction, it follows $W_n \sim \tgamma(a_n,\theta)$ for each $n \geq 1$. Indeed, assume this holds for $W_{n-1}$, observe that for any $n \geq 2$ 
\begin{equation}\label{eq:WUGW}
	 W_{n-1}  = U_{n-1} (G_n + W_n)
\end{equation}
 with $U_n$ and $G_n$ and $ W_n$ all independent. By beta-gamma algebra, it follows that $G_n+ W_n$  has $\tgamma(a_{n-1}+b_{n-1},\theta)$ distribution, and therefore
 $W_n\sim \tgamma (a_n,\theta)$ as claimed.

To show that $W_1 \sim \tgamma(a_1,\theta)$, observe  
\[ W_1 =  G_2 U_1+ \cdots + G_{n-1} U_1 \cdots U_{n-2}
+(G_n +W_n) U_1 \cdots U_{n-1}.
\] 
If one could assume  $W_n\sim \tgamma (a_n,\theta)$, by the independence of $G_n,\ U_n, \ n \geq 1$	and by reversing the argument after \eqref{eq:WUGW}, it follows that $W_1 \sim \tgamma(a_1,\theta)$. 
To show this, we couple a sequence $(W_n)$ with a sequence  which
has the desired distribution. 
Consider an auxiliary sequence 
$W'_{n}\sim \tgamma (a_n,\theta)$ independent of $G_n,\ U_n, \ n \geq 1$ above. Let  
\[
W_{n,1} = G_2 U_1+ \cdots + G_{n-1} U_1 \cdots U_{n-2}
+(G_n +W'_n) U_1 \cdots U_{n-1}.
\]
As $n \toi$, it satisfies
$\EE | W_{1} -W_{n,1} |  \to 0$ as $n \toi$. But now, as explained above, $W_{n,1}\sim \tgamma(a_1,\theta)$ for each $n$. Hence, the same holds for $W_1$. By the first part of the proof, $W_n$ has the same distribution as $W'_n$.
 
To prove \eqref{eq:Y1Y2gen}, observe first that
$
\tilde Y_n = G_n/( G_n+W_n)
$
have $ \tbeta(c_n,a_n)$ distribution. 
Moreover, by \eqref{eq:WUGW}, one can write
\begin{align*}
	\MoveEqLeft{  \frac{G_n \Pi_n}{\sum_{j=1}^\infty G_j \Pi_j} 	= \frac{G_n \prod_{l=1}^{n-1} U_l   }{ G_1+W_1}  =
		\frac{G_n}{ G_n+W_n } 
		\dfrac{(G_n+W_n) U_{n-1}}{ G_{n-1}+W_{n-1} }
		\cdots
		\dfrac{(G_2+W_2) U_1}{ G_1+W_1 }}  \\
	& = \tilde  Y_n 	\dfrac{W_{n-1}}{ G_{n-1}+W_{n-1} }
	\cdots
	\dfrac{W_1}{ G_1+W_1}			
	=\tilde  Y_n (1-\tilde Y_{n-1}) \cdots (1-\tilde Y_1) \,.
\end{align*}
 To show mutual independence of $\tilde Y_i$'s, for simplicity consider $\tilde Y_1$ and $\tilde Y_n = G_n /(G_n + W_n)$, observe that the latter is independent of $G_n + W_n$ and that 
\[ 
\tilde Y_1  = 
\dfrac{G_1 }{G_1+W_1}=
G_1/
\left(G_1 
+G_2 U_1+ \cdots
+(G_n +W_n) U_1 \cdots U_{n-1}\right)\,,
 \]
with all the terms on the right hand side independent of $\tilde Y_n$. By analogous calculations, it follows that 
$\tilde Y_n$ is independent of each of $\tilde Y_1,\ldots, \tilde Y_{n-1}$.

\end{proof}

\section{Notation and auxiliary results}\label{Sec:aux}

\paragraph{Space of counting measures.}
On a general Polish space $\bbS$ we denote by $\sS$ a family of so called bounded measurable sets. 
We call set $B \subseteq \bbS$ bounded if for some fixed point $s_0 \in \bbS$ and a fixed metric generating chosen topology on $\bbS$, there exists a ball centred at $s_0$ containing such a set.   By  $M_p(\bbS) = M_p(\bbS,\sS)$ we denote the space of counting measures on $\bbS$ which are finite on bounded sets in $\sS$. Note, one can find a metric on $(0,\infty)$ which makes sets bounded when they are contained in $(\vep,\infty)$ for some $\vep >0$. Similarly, there exists a metric which preserves the standard topology on $(0,\infty)^2$ and which makes a set bounded if it is contained in $(0,1/\vep) \times (\vep,\infty)$ for some $\vep >0$.  These metrics can be chosen so that they preserve the standard topology on $(0,\infty)$ and $(0,\infty)^2$, see \cite{BP19}.  They precisely define the concepts of boundedness  we use in
the space  $M_p(0,\infty)$ and  $M_p((0,\infty)^2)$ respectively.
Following Kallenberg \cite{Ka17}, the space $M_p(\bbS)$ is endowed with the vague topology
which makes $M_p(\bbS)$ Polish again and generates 
corresponding Borel $\sigma$-algebra on $M_p(\bbS)$ that used throughout the text.

Recall, a point process on an underlying space $\XX$ is simply a random element in $M_p(\XX)$. Poisson point processes as the main example have distributions determined by their intensity measure, we refer to \cite{Ka17} or \cite{LP18} for a general theory. We often use  transformations of a Poisson process to arrive at new Poisson process, see \cite[Theorem 5.1]{LP18} for instance. Suppose that $
N^E = \sum_i  \delta_{T_i,P_i,E_i}$  is a Poisson process with intensity measure equal to $Leb \times \mu \times \nu_{E}$ where $\nu_E$ denotes the law of the standard exponential distribution. 
For a fixed $a>0$, in \eqref{eq:NEkrozPgen} we transformed  $N^E$ 
into
$N^{E/P} = N^{E/P}_a = \sum_{T_i \leq a} \delta_{E_i/P_i, P_i}$,
which is again Poisson point process. Its intensity measure, $\nu^{(2)}$ depends on $\mu$ and $a$ and can be calculated as in Theorem 4.5 in \cite{PPY92}, but for some examples it is straightforward to calculate.

\begin{example} \label{ex:EPab}
(a) Assume $\mu(dt) = \gamma(dt) = t^{-1} e^{-t} dt $. as in the case of gamma subordinator.
Consider a set of the form $D_{u,v}= (0,u) \times (v,\infty)$ with arbitrary $u,v>0$ and observe that
\begin{align*}
	\nu^{(2)}(D_{u,v}) = \EE\left[N^{E/P}_a(D_{u,v}) \right] & = \int_v^\infty  \frac{a}{t} e^{-t} dt
	\int_{0}^{ut} e^{-s} ds =
	\int_{0}^{u}\frac{a\, ds}{1+s } \int_v^\infty   e^{-(1+s)t} (1+s) dt.
\end{align*}
Thus, one can write 
$\nu^{(2)} (du,dv) = \gamma_a (du) \otimes K(u,dv) $,  
where $\gamma_a (0,u) = \log (1+u)^a$ and $K(u,\cdot) $ is a probability marking kernel on $(0,\infty)\times \sB(0,\infty) $ corresponding to the  exponential distribution with parameter $1+u$.

(b) 
 Assume $\mu(dt) = \mu_{\alpha}(dt) $ as in \eqref{eq:Densmualfa}.
Consider again a set of the form $D_{u,v}$ for $u,v>0$ as above. Observe that
\begin{align*}
	\nu^{(2)}(D_{u,v}) = \EE\left[N^{E/P}(D_{u,v}) \right] & = \int_v^\infty \mu_\alpha(ds) \int_{0}^{us} e^{-t} dt =
	\int_{0}^{u}\alpha  t^{\alpha-1} dt \int_v^\infty \frac{s^{-\alpha} }{\Gamma(1-\alpha) }  e^{-st}  t^{1-\alpha} ds .
\end{align*}
In this case therefore, 
$\nu^{(2)} (du,dv) = \nu_\alpha(du) \otimes K(u,dv) $, where   $\nu_\alpha (0,u) = u^\alpha$ and $K(u,\cdot) $ is a probability marking kernel on $(0,\infty)\times \sB(0,\infty) $ corresponding to the $\tgamma(1-\alpha, u)$ distribution.
\end{example}

 We denote by   $M_p(0,\infty) $  the set of counting measures on $(0,\infty)$ which are finite on sets of the form $(u,\infty)$,  $u>0$,
 and   further by  $M_{p,f}= \{ \mu \in M_p(0,\infty) : \int_{(0,\infty)} x \mu(dx) <\infty  \}$
 and  $M_{p,1}= \{ \mu \in M_p(0,\infty) : \int_{(0,\infty)} x \mu(dx) =  1 \}$, the sets of counting measures  whose points sum up to a finite number and 1, respectively.     Observe that $\sum_{T_i \leq T} \delta_{P_i}$ a.s. belongs to
 $M_{p,f}$ for any fixed $T$.

Denote now by $M_{\text{simp}}= M_{\text{simp}}((0,\infty)^2) $ the set of all counting measures $m = \sum_i \delta_{t_i,p_i} \in M_p$
for which  the projection $ \sum_i \delta_{t_i}\1{p_i>\vep}$  is a simple counting measure finite on time interval $ (0,s)$ for each $\vep >0$ and $s>0$.
Denote further
\begin{align*}
	\MoveEqLeft{A_p = \left\{m = \sum_i \delta_{t_i,p_i} \in M_{\text{simp}} :
		\ \sum_{t_i<s} p_i <\infty \mbox{ for all } s>0, \mbox{ and } 
		\right.}\\
	& \qquad \left.  
	\sum_i p_i \1{p_i>\vep} = \infty \mbox{ for some } \vep >0 \mbox{ small enough} \right\}.
\end{align*}

Take now an arbitrary $m= \sum_i \delta_{t_i,p_i} \in A_p$, and an i.i.d. sequence $(E_i)$ of standard exponential random variables, and let
\begin{equation}\label{eq:wtT}
	\tilde T = \inf \{ t_i : p_i >E_i\}.
\end{equation}

\begin{lemma}\label{lem:1}
	For any $m= \sum_i \delta_{t_i,p_i} \in A_p$, the infimum in \eqref{eq:wtT} is attained. In particular
	$\tilde T = \min \{ t_i : p_i >E_i\}$ for a unique value $t_j$, and if we denote by $\tilde E $ the corresponding $E_j$, then the random variable
	\[
	\tilde \Gamma  = \sum_{t_i<\tilde T} p_i + \tilde E \mbox{ has the standard exponential distribution}.
	\]
\end{lemma}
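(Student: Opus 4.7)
The plan is to first establish that the infimum in \eqref{eq:wtT} is attained at a unique time value $t_j$, and then to identify the law of $\tilde \Gamma$ by a direct Laplace transform computation that reduces to a telescoping identity.

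For the first step, I would show that $J := \{i : E_i < p_i\}$ is nonempty a.s.\ and that $\min\{t_i : i \in J\}$ is attained. By independence of the marks,
\[
 \mathbb{P}(J = \emptyset) = \prod_i e^{-p_i} = \exp\Bigl(-\sum_i p_i\Bigr) = 0,
\]
because $\sum_i p_i \geq \sum_i p_i \mathbf{1}\{p_i > \varepsilon\} = \infty$ by the definition of $A_p$. Picking any $i_0 \in J$, the set $J \cap \{i : t_i \leq t_{i_0}\}$ has expected cardinality bounded by $\sum_{t_i \leq t_{i_0}}(1 - e^{-p_i}) \leq \sum_{t_i \leq t_{i_0}} p_i < \infty$, hence is a.s.\ finite; its minimum is therefore attained. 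Uniqueness of the time value $t_j$ follows from the simplicity assumption in $A_p$: if $t_{i_1} = t_{i_2}$ for $i_1 \neq i_2$, then choosing $\varepsilon < \min(p_{i_1}, p_{i_2})$ would produce a double point in $\sum_i \delta_{t_i} \mathbf{1}\{p_i > \varepsilon\}$, contradicting the hypothesis.

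For the second step, I would compute $\mathbb{E}[e^{-\lambda \tilde \Gamma}]$ for $\lambda > 0$. Setting $S_j = \sum_{t_i < t_j} p_i$, which is finite by the definition of $A_p$, the event $\{\tilde T = t_j\}$ equals $\{E_j < p_j\} \cap \bigcap_{t_i < t_j}\{E_i \geq p_i\}$, and independence of the $E_i$'s yields
\begin{align*}
 \mathbb{E}\bigl[e^{-\lambda \tilde \Gamma}\mathbf{1}\{\tilde T = t_j\}\bigr]
 & = e^{-\lambda S_j} \Bigl(\prod_{t_i < t_j} e^{-p_i}\Bigr) \int_0^{p_j} e^{-(1+\lambda)x}\,dx \\
 & = \frac{1}{1+\lambda}\bigl(e^{-(1+\lambda) S_j} - e^{-(1+\lambda)(S_j + p_j)}\bigr).
\end{align*}
Summing over all points $j$ of $m$, the right-hand side becomes the total sum of downward jumps of the nonincreasing function $F(t) = e^{-(1+\lambda)\sum_{t_i < t} p_i}$. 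Because the measure $\sum_i p_i \delta_{t_i}$ driving $F$ is purely atomic, $F$ has no continuous component, so this sum equals $F(0+) - F(\infty) = 1 - 0 = 1$, using once more that $\sum_i p_i = \infty$. Hence $\mathbb{E}[e^{-\lambda \tilde \Gamma}] = 1/(1+\lambda)$, which identifies $\tilde \Gamma \sim \texp(1)$.

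The main obstacle I anticipate is the telescoping step, because the times $t_i$ may accumulate (notably near $0$) and no global increasing enumeration is available. The clean way around this is to avoid enumerating altogether: interpret the sum as the total variation of the monotone pure-jump function $F$ on $(0,\infty)$, which is $1$ by monotonicity and the boundary values $F(0+) = 1$ and $F(\infty) = 0$. The finiteness of each $S_j$, granted by the $A_p$ hypothesis, ensures that every individual term is well defined, so the identification of the full sum with $F(0+) - F(\infty)$ is simply monotone convergence over locally finite truncations of $m$.
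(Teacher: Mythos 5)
Your proof is correct and takes a genuinely different route from the paper. The paper establishes the exponential law of $\tilde\Gamma$ in three stages: it first proves the claim for a deterministic increasing enumeration $q_1, q_2, \ldots$ of masses (a direct computation of $\PP(\Gamma > u) = e^{-u}$ using that $u$ lies in a unique interval $(s_{k-1}, s_k]$); it then applies this to the $\vep$-truncation $\{p_i > \vep\}$ of the configuration $m$, which is locally finite in time and hence admits such an enumeration; finally it lets $\vep \to 0$ and transfers the exponential law by a.s.\ convergence $\Gamma^\vep \to \tilde\Gamma$. Your route avoids the truncation-and-limit step entirely: you decompose $\EE[e^{-\lambda\tilde\Gamma}]$ over the disjoint events $\{\tilde T = t_j\}$ and recognize the resulting sum as the total downward variation of the nonincreasing step function $F(t) = e^{-(1+\lambda)\sum_{t_i<t}p_i}$, which telescopes to $F(0+) - F(\infty) = 1$. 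This is shorter and, I think, more illuminating, but it buys that compactness by relying on the assertion that $F$ is pure jump, i.e.\ that composing the purely atomic nondecreasing function $G(t) = \sum_{t_i<t}p_i$ with $s \mapsto e^{-(1+\lambda)s}$ produces a function whose total variation equals the sum of its jumps, even when the $t_i$ accumulate. That is a true standard fact — it amounts to the range of a pure-jump nondecreasing function having Lebesgue measure zero, so the disjoint intervals $(S_j, S_j + p_j)$ exhaust $[0,\infty)$ up to a null set — but you leave it at the level of assertion plus a gesture toward monotone truncation, whereas the paper's more pedestrian limiting argument is fully spelled out. If you flesh out that single step (for instance via the null-range observation, or by approximating $G$ uniformly by its finite truncations), the proof stands on its own. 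Your treatment of the first part (a.s.\ nonemptiness, finiteness of $J$ below any fixed time, and uniqueness via simplicity of the $\vep$-truncated projection) is essentially identical to the paper's Step 1.
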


\begin{proof}
	If one uses the image of the function  $g(s)= \sum_{t_i \leq s} p_i$  to partition the half line $[0,\infty)$ and the memoryless property of the exponential distribution, the claim is wholly unsurprising, since we could not find a reference to this fact we give a detailed proof. 
	
	Step 1.
	Observe first that the point process $\sum_i \delta_{t_i,p_i} \1{E_i < p_i}$ has finite intensity $\sum_{t_i < s} (1- e^{-p_i}) \leq \sum_{t_i < s} p_i$
	on each set $[0,s)\times[0,\infty)$. Moreover, it is a.s. nontrivial process, because $\PP(E_i \geq p_i \mbox{ for all } i) = \exp(- \sum_i p_i) = 0$. 
	
	Step 2. Assume now that $(q_j)$ is a sequence in $(0,\infty)$, satisfying $\sum_i q_i =\infty$, and denote its partial sums by $s_k = \sum_{j=1}^{k} q_j $.
	Recall that $(E_j)$ is an i.i.d. sequence of standard exponential random variables. Let
	$ J=  \min\{ j: E_j < q_j\}$, then $\PP(J <\infty) =1$ by the argument in the previous step. Hence, 
	$\Gamma = s_{J-1} + E_J$ is well defined. Take now $u>0$, then $u \in (s_{k-1},s_k]$, for some $k \geq 1$. Clearly
	$\PP(\Gamma>u) = \PP(E_i\geq q_i\,, i =1, ..., k-1; \, E_k > u -s_{k-1} )
	= e^{s_{k-1}} e^{-u +s_{k-1}} = e^{-u}$. Thus $\Gamma \sim \texp(1)$.

	Step 3.  Consider now point process $\sum_i\delta_{t_i,p_i,E_i}$ with the state space $(0,\infty)^3$.
	Since $m \in A_p$, we know that $\sum_i\delta_{t_i,p_i, E_i}\1{p_i > \vep}$ for any $\vep>0$  can be written as $\sum_j\delta_{t^\vep_j,p^\vep_j, E^\vep_j}$ with $0\leq t^\vep_1 < t^\vep_2 < \ldots$
	Consider now random value $J^\vep= \min \{j: p^\vep_j > E^\vep_j \}$. It is well defined and a.s. finite by the argument given  in Step 1. 
	Let $ T^\vep  = t^\vep_J$ and $ \Gamma^\vep = \sum_{t^\vep_j < T^\vep} p^\vep_j + E^\vep_J= \sum_{t_i < T^\vep} p_i \1{p_i > \vep} + E^\vep_J$. By the step 2, $\Gamma^\vep \sim \texp(1)$.
	
	Step 4. By construction $\tilde T \leq T^\vep$, however $\PP(E_i >0, \mbox{ for all } i) = 1$, thus with probability one there exists $\vep >0 $  such that
	$\tilde T = T^\vep$, and in that case $\tilde E = E^\vep_J$. Finally $\sum_{t_i<s} p_i\1{p_i > \vep} \to \sum_{t_i<s} p_i $ by the monotone convergence argument, therefore $\Gamma^\vep \asto \tilde \Gamma$ as $\vep \to 0$, so
	$\tilde \Gamma$ is standard exponential as claimed.
	
\end{proof}

\begin{lemma}\label{lem:2}
	The point process $\Npp= \sum_i \delta_{T_i,P_i}$ is independent of the  random variable  $\Gamma$ in \eqref{eq:Gamma} which has the standard exponential distribution.
\end{lemma}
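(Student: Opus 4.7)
The plan is to reduce the statement to \Cref{lem:1} by conditioning on $\Npp$. Indeed, $\Gamma$ in \eqref{eq:Gamma} can be rewritten as a measurable functional of $\Npp$ together with the i.i.d.\ marking sequence $(E_i)$: enumerating the points of $\Npp$ as $(T_i,P_i)_i$, we have $T^\wedge_1 = \min\{T_i : P_i > E_i\}$, and by minimality every index with $T_i < T^\wedge_1$ automatically satisfies $P_i \leq E_i$. Hence the condition $T^\vee_i \leq T^\wedge_1$ is equivalent to $T_i < T^\wedge_1$, and
\[
\Gamma \;=\; \sum_{T_i < T^\wedge_1} P_i + E_{j_\star},
\]
where $j_\star$ is the unique index with $T_{j_\star} = T^\wedge_1$. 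This is exactly the random variable $\tilde\Gamma$ produced in \Cref{lem:1} when the deterministic configuration there is taken to be $m = \Npp$.

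First I would verify that $\Npp \in A_p$ almost surely. Simplicity of its projection follows from standard properties of Poisson point processes. The bound $\sum_{T_i \leq s} P_i = \Sub_s < \infty$ for every $s>0$ is immediate. Finally, since $\mu_\alpha((1,\infty)) > 0$, the restricted process $\sum_i \delta_{T_i, P_i}\1{P_i > 1}$ is a homogeneous PPP on $(0,\infty)^2$ with infinite mass, so $\sum_i P_i \1{P_i > 1} = \infty$ a.s., verifying the divergence condition in the definition of $A_p$ with $\vep = 1$.

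By the marking theorem applied to $N^E$, the sequence $(E_i)$ is i.i.d.\ $\texp(1)$ and independent of $\Npp$; therefore, conditionally on $\Npp = m$ (with $m \in A_p$ for a.e.\ realization), the marks $(E_i)$ remain i.i.d.\ $\texp(1)$. Applying \Cref{lem:1} to the deterministic configuration $m$, the conditional distribution of $\Gamma$ given $\Npp = m$ is $\texp(1)$. Since this conditional law does not depend on $m$, it follows that $\Gamma$ is independent of $\Npp$ and has the standard exponential distribution.

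The only delicate point is the identification of $\Gamma$ with the functional $\tilde \Gamma$ of \Cref{lem:1}: one has to note that the points of $N^\vee$ lying in $[0, T^\wedge_1]$ are exactly the points of $\Npp$ in $[0, T^\wedge_1)$, a fact that follows from the very definition of $T^\wedge_1$ as the first $T_i$ with $P_i > E_i$. Beyond this, the argument is just a clean conditional application of \Cref{lem:1}, relying on the independence of $\Npp$ from the Poisson marks $(E_i)$ that the marking theorem supplies.
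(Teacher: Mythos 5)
Your proof is correct and takes essentially the same approach as the paper: condition on $\Npp$, identify $\Gamma$ with the $\tilde\Gamma$ of \Cref{lem:1}, use that lemma to show the conditional law of $\Gamma$ given $\Npp$ is $\texp(1)$, and conclude independence. You spell out more detail than the paper does---explicitly verifying $\Npp \in A_p$ almost surely and checking the identification of $\Gamma$ with $\tilde\Gamma$---whereas the paper simply asserts the membership in $A_p$ and writes the conditioning identity directly.
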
	

\begin{proof}
	
	Observe that  $\Npp $ belongs to the set $ A_p$ of the lemma above with probability one. Therefore,  for any $u >0$ and measurable set in $M_p((0,\infty)^2)$
	\begin{align}\label{eq:DFLT}
		\PP \left( \Npp \in A ; \Gamma \leq u  \right)
		=
		\EE  \left[\1{\Npp \in A} \EE \left( \1{ \Gamma \leq u} \mid\, \Npp \right) \right] =
		\left( 1- e^{-u} \right) \PP  \left( \Npp \in A \right).
	\end{align}
	
\end{proof}



\bibliography{pitmanyor}
\bibliographystyle{abbrv}

\end{document}